\documentclass[a4paper]{article} 
\usepackage[english]{babel}

\usepackage[utf8]{inputenc}
\usepackage{lmodern}

\usepackage{amsmath,amsfonts,amssymb,amscd, amsthm}
\usepackage{authblk}

\usepackage{booktabs}
\usepackage{multirow}
\usepackage{alltt}
\usepackage{algorithm}
\usepackage{algorithmic}

\usepackage{color}
\usepackage[pdftex]{graphicx}
\usepackage[pdftex]{thumbpdf}
\graphicspath{{figures/}}

\usepackage{tikz}
\usetikzlibrary{arrows,calc,decorations.markings,intersections,positioning,shapes.arrows}

\usepackage{caption}
\usepackage{subcaption}
\usepackage[colorlinks=true]{hyperref}

\usepackage{url}

\usepackage[backend=bibtex]{biblatex}
\bibliography{main}

\newcommand{\R}{\mathbb{R}}

\newcommand{\half}{\frac{1}{2}}

\newcommand{\norm}[1]{\left\|#1\right\|}

\newcommand{\Ito}{It\^{o}}

\usepackage{pgfplots}
\usepackage{pgfplotstable}
\pgfplotsset{compat=newest}
\usepackage{booktabs}

\newcommand{\revbm}[1]{{\color{black}{#1}}}

\newcommand{\orcid}[1]{\href{https://orcid.org/#1}{\small [#1]}}
\newtheorem{proposition}{Proposition}
\newtheorem{theorem}{Theorem}
\theoremstyle{remark}
\newtheorem{remark}{Remark}

\begin{document}

\title{Toward Spectral Deferred Correction Methods for Stochastic Differential Equations}
\author[1]{Lisa Binkowski \orcid{0000-0001-5673-1388}}
\author[2]{Matthew R. Christian \orcid{0009-0003-7649-4158}}
\author[1,3]{Sebastian Götschel \orcid{0000-0003-0287-2120} }
\author[2]{Michael L. Minion \orcid{0000-0002-0044-9778}}

\affil[1]{Zuse Institute Berlin, Germany Germany, \texttt{lbinkowski@mailbox.org, goetschel@zib.de}}
\affil[2]{formerly of  Department of Mathematics, University of North Carolina at Chapel Hill, USA, \texttt{mrxian85@gmail.com, mlminion@gmail.com}}
\affil[3]{Chair Computational Mathematics, Institute of Mathematics, Hamburg University of Technology, 21073 Hamburg, Germany, \texttt{sebastian.goetschel@tuhh.de}}

\maketitle

\begin{abstract}
The numerical solution of stochastic differential equations is computationally very demanding. Often it is of interest to evaluate certain moments, e.g., the expectation, by Monte Carlo sampling, thus requiring the computation of numerous realizations. The discretization error of the numerical scheme induces bias in the computed Monte Carlo estimator. As popular numerical schemes, like Euler-Maruyama or Milstein, are low-order, even reasonably coarse error bounds for the simulation are costly to achieve. While the evaluation of moments requires only weak convergence, in other applications like parameter estimation, strong convergence is important, but higher-order is typically harder to obtain in the strong than in the weak sense. In this paper we develop an efficient method with the potential of achieving higher strong and weak convergence order. It is based on the smooth approximation of Brownian bridges combined with spectral deferred correction methods well known for the numerical solution of ordinary differential equations. We discuss convergence, and present numerical results.

\textbf{Keywords:} spectral deferred correction, stochastic differential equations, Wong-Zakai approximation, smooth Brownian bridge
\end{abstract}

\section{Introduction}

Stochastic differential equations (SDEs) are used for modeling various real-world problems in different application areas, ranging from finance, e.g., option pricing, to biology, e.g., gene expression. As most SDEs cannot be solved analytically, numerical methods are required to compute the solution. 
While in some applications it is of interest to evaluate certain moments of the solution, like the expected value, by Monte Carlo sampling, there are cases where strong solutions, i.e., trajectories are required. This includes for example systems where the solution exhibits a number of stable steady states with switching occurring due to noise~\cite{rackauckas2018stability}, or gene regulation networks, where the behavior of just one molecule can be significant~\cite{BurrageBurrageTian2004}.

The most common numerical methods for SDEs are the Euler-Maruyama and Milstein schemes that can achieve strong order one half and one, respectively, under suitable regularity assumptions like global Lipschitz-continuity of drift and diffusion terms \cite{KloedenPlaten1992}. 
As the standard Euler–Maruyama and Milstein scheme can fail to converge for SDEs with non-globally Lipschitz coefficients, modified methods have been developed for this case~\cite{HutzenthalerEtAl2012,WangGan2013,KellyLord2022}.
If such low-order methods are used to approximate SDE solutions, very fine time discretizations might be required to achieve a prescribed approximation accuracy. Also, applications like Bayesian inference for parameter estimation might benefit from higher convergence orders, see, e.g.~\cite{lie2018random}, where the \emph{strong} convergence order of the numerical methods directly translates into convergence order for the approximation of the posterior distribution.

Various attempt to achieve somewhat higher strong convergence orders have been made. They include stochastic Runge-Kutta (SRK) based methods, which are investigated in, e.g., \cite{BurrageBurrage1996,burrage1999high}, with strong convergence order up to $1.5$. SRK methods for \Ito-SDEs converging in a weak sense with order $2$ for scalar noise and with order greater than $2$ for small noise are developed in \cite{rossler2006runge}; for small noise, convergence orders up to $2$ are achieved in~\cite{buckwar2010stochastic}. 
Various other second-order SRK  methods have been developed, e.g., \cite{rosslerSecondOrderRunge2007,DEBRABANT2008408,Roessler2010}, but higher convergence orders are hard to achieve due to a large number of order conditions even for cases with commutative noise~\cite{Roessler2004}. We also refer to~\cite{BurrageBurrageTian2004,HighamKloeden2021} for an overview.

In this paper we make use of the Wong-Zakai approximation~\cite{WongZakai1965,Sussmann1978} of the driving Wiener process, replacing it with its truncated Fourier series, to transform the SDE into an ordinary differential equation (ODE) with random right-hand-side, which then is solved using an adapted variant of spectral deferred correction methods~\cite{dutt2000spectral} to obtain higher convergence orders. While the Wong-Zakai approximation is long known, only a few numerical schemes based on this approximation exist. Cao et al.~\cite{CaoEtAl2015} derive numerical methods for stochastic delay differential equations and obtain convergence rates up to $1$. Wong-Zakai approximation is used in~\cite{kamrani2018implicit} to efficiently evaluate double stochastic integrals in an implicit Milstein method, and show that this retains stability region and convergence order of the Milstein scheme. Londo\~no and Villegas~\cite{LondonoVillegas2016} combine it with a standard ODE solver to obtain numerical solutions, but do not attempt to derive higher-order schemes. 

The remainder of the paper is structured as follows. We briefly recall SDEs and spectral deferred correction (SDC) methods in section~\ref{section:preliminaries}. In section~\ref{section:SBB} we use a smooth approximation of Brownian bridges to construct SDC methods for SDEs, and discuss convergence for the simplified case of a linear Brownian bridge approximation in section~\ref{section:ConvResults}. Numerical examples are presented in section~\ref{section:NumExp}.

\section{Preliminaries}\label{section:preliminaries}
In this section, to fix notation, we will give a short introduction to SDEs, see, e.g.,~\cite{KloedenPlaten1992,Oksendal2003} for more details. Also, we will summarize the basic idea of spectral deferred correction methods for ODEs~\cite{dutt2000spectral}.

\subsection{Stochastic differential equations}

A parameterized collection of random variables $\left\{X_t\right\}_{t\in [0, T]} \in \mathbb{R}^n$ defined on a filtered probability space $\left\{\Omega, \mathcal{F},  \mathcal{F}_t, P \right\}$ with $0\leq t \leq T$ is called a stochastic process. Here, $\mathcal{F}_t$ denotes a filtration, representing the information available up to time $t$. In the context of stochastic differential equations, the process $X_t$ is typically assumed to be adapted to this filtration. A path of $X_t$ is a function $t \mapsto X_t(\omega)$ with fixed outcome $\omega \in \Omega$ and $t \in [0, T]$. Keeping $t \in [0,T]$ fixed, we obtain the random variable $\omega \mapsto X_t(\omega)$ with outcomes $\omega \in \Omega$.

A Wiener process $W=\left\{ W_t, t \geq 0 \right\}$, also called (standard) Brownian motion, is a stochastic process with  $W(0)=0$, expectation $\mathbb{E}(W(t))=0$ and variance \mbox{$\mathbb{V}(W(t)-W(s)) = t-s$} for all $0\leq s\leq t$. The equation
\begin{equation}
	\label{eq:SDE}
	dX_t = a(t,X_t)dt + b(t,X_t) dW_t
\end{equation} 
defines a scalar stochastic differential equation (SDE), with drift $a(t,X_t)$  and diffusion term $b(t,X_t)$. The path $W_t$ of a Brownian motion is continuous, but not differentiable. 
In order to give a unique meaning to the SDE, two integral forms can be defined, the \Ito \ integral form 
\begin{equation}
	\label{eq:ItoSDE}
	X_t = X_0 + \int\limits_{0}^{t} a(s,X_s) ds + \int\limits_{0}^{t} b(s,X_s) dW_s
\end{equation}
and the Stratonovich integral form 
\begin{equation}
	\label{eq:StratSDE}
	X_t = X_0 + \int\limits_{0}^{t} \underline{a}(s,X_s) ds + \int\limits_{0}^{t} b(s,X_s) \circ dW_s.
\end{equation}
%
Given a scalar \Ito-SDE~\eqref{eq:ItoSDE}, it can be transformed to Stratonovich form using the modified drift term
$\underline{a}(t,X_t) = a(t,X_t) - \half b(t,X_t)\frac{\partial}{\partial x}b(t,X_t),$
see~\cite{evans2012introduction,KloedenPlaten1992}, where also the extension to the multidimensional case can be found.

For the numerical solution of SDEs, two notions of convergence are considered here. The time discrete approximation $\hat{X}$ given step size $\Delta T > 0$ \emph{converges strongly with strong oder $p$} to the solution $X$ of equation \eqref{eq:SDE} at final time $T$ if 
for a constant $C$ the inequality 
$\mathbb{E}\bigl( \bigl|X_T -\hat{X}(T)\bigr| \bigr) \leq C \Delta T^p$
holds for all sufficiently small $\Delta T$~\cite{KloedenPlaten1992}. 
The discrete time approximation $\hat{X}$ \emph{converges weakly with weak order $q$} to the solution $X$, if
$\bigl|\mathbb{E}\bigl( g\bigl(X_T\bigr)\bigr) - \mathbb{E}\bigl( g\bigl(\hat{X}(T) \bigr)\bigr) \bigr| \leq C \Delta T^q $
holds for a class of test functions $g: \mathbb{R}^{n} \rightarrow \mathbb{R}$. If the class of test functions contains all polynomials, we get weak convergence of all moments~\cite{KloedenPlaten1992}.  We do not consider path-wise convergence, i.e.,
\[
\sup_{0\leq t\leq T}|\hat{X}(t,\omega)-X(t,\omega)|\to 0 \text{ as } \Delta T \to 0,
\]
as, for example, discussed by Jentzen et al.~\cite{JentzenEtAl2009} or recently by Allen et al.~\cite{AllenEtAl2025}.

\subsection{Spectral Deferred Corrections for ODEs}
Spectral deferred correction (SDC) methods introduced by Dutt et al.~\cite{dutt2000spectral}, and further developed, amongst others, by Minion~\cite{minion2003semi}, Weiser~\cite{weiser2015}, and Speck et al.~\cite{speck2015multi} are iterative high-order time-stepping schemes for ODEs
$	\frac{d}{dt}x(t) = a(t, x(t)), x(0) = x_0.$
Considering a single time interval $[0, \Delta T]$, the problem is written in Picard integral form as
\begin{equation}
	x(\Delta T) = x_0 + \int_{0}^{\Delta T} a(s,x(s))\, ds.
\end{equation}
Given $M+1$ collocation nodes, $0 \leq \tau_0 < \dots < \tau_M \leq \Delta T$, 
we get a similar integral form for the solution values $x_m = x(\tau_m)$ at the collocation nodes.
Starting from a low-order predictor, the initial approximation $x_m^{0}, m=0,\dots,M$, the iterates are updated to $x_m^{k+1} = x_m + \delta^k_m$ with
\begin{equation}
	\begin{split}
		\delta^k(\tau_{m+1}) &=  \delta^k(\tau_{m}) + \int_{\tau_m}^{\tau_{m+1}} a(s, x^k(s)+\delta^k(s)) -a(s, x^k(s)) ds \\ 
		&+ \int_{\tau_{m}}^{\tau_{m+1}} a(s, x^k(s)) ds + x^k(\tau_{m}) -x^k(\tau_{m+1}).
		\label{eq:Correction}
	\end{split}
\end{equation} 
Here, the first integral may, in the non-stiff case, be approximated by a left-looking rectangular rule (i.e., explicit Euler), and the second integral is approximated to high order using spectral quadrature.

\section{SDC with Smooth Brownian Bridges} 
\label{section:SBB}

To apply SDC methods to SDEs, we consider the Stratonivic integral form \eqref{eq:StratSDE}, and, to obtain higher-order methods, require information on the Wiener process between time steps.
For this, we will make use of a \emph{Brownian bridge}, i.e., a modified Wiener process for  which the sample paths all pass through given points {$(t_0, Y_0), (t_0+\Delta T, Y_1)$. Such a process $B^{\Delta T,Y_1}_{t_0,Y_0}(t,\omega)$ is defined sample path-wise for $t \in [t_0,t_0+\Delta T]$ by
	\begin{equation}\label{eq:bbridge}
		B(t) := B^{t_0+\Delta T,Y_1}_{t_0,Y_0}(t,\omega) = Y_0 + W(t\revbm{-t_0},\omega) - \frac{t -t_0}{\Delta T}(W(\revbm{\Delta T},\omega) - Y_1 + Y_0).
	\end{equation}
	A Brownian bridge is a Gaussian process with mean $\mu(t)$ and  covariance $C(s,t)$ given as
	\begin{equation}
		\mu(t) = Y_0 - \frac{t-t_0}{\Delta T}(Y_0-Y_1),\qquad C(s-t_0,t-t_0) = \min\{s-t_0,t-t_0\}  - \frac{(s-t_0)(t-t_0)}{\Delta T},
	\end{equation}
	see~\cite{KloedenPlaten1992}.
	Using a Brownian bridge makes it possible to sample values of Wiener processes between the Brownian nodes, i.e., the nodes of the macro time grid $\{t_n, n=0,\dots,N\}$, with the correct probability distribution. However, in order to use a differential equations solver with this Brownian bridge it would be necessary to make the bridge line up with the substep values that the solver needs to sample from. Also, if at any point the solver needs to differentiate the bridge, its discrete nature will be problematic. Strong numerical methods for solving SDEs avoid this issue by using integrals of $W(t)$ to achieve higher-order accuracy, which are  often computationally expensive to evaluate, e.g., in the case of vectorial equations with multidimensional Wiener processes.
	
	In this section we will use smooth approximations of Brownian bridges to construct spectral deferred correction methods (SDC-SBB) for the numerical solution of Stratonovich-SDEs.
	
	\subsection{Smooth approximation of a Brownian bridge}
	\label{subsec:SBB}
	For ease of exposition, consider a scalar SDE with a scalar standard Wiener process $W(t)$, for the multidimensional case see section~\ref{subsec:Multidim}.
	
	The basic idea is to construct a Brownian bridge between two samples from the Wiener process, and further approximate the Brownian bridge $B(t)$ by a continuously $t$-differentiable stochastic process $B^{(J)}(t)$ such that $B^{(J)}(t) \rightarrow B(t)$ as $J\rightarrow \infty$, similar to the Wong-Zakai approximation of Brownian motion. 
	In order to construct such a smooth Brownian bridge (SBB) approximation, we use the representation of Wiener processes $W(t)$ on the interval $[0,\Delta T]$ by the Karhunen-Loéve expansion~\cite{Knight1981}, 
	\begin{equation}\label{eq:WtWZ}
		W(t) = \frac{t}{\Delta T}W(\Delta T) + \frac{\sqrt{2}}{\pi} \sum_{j=1}^{\infty} \frac{\sin(j\pi t/\Delta T)}{j} \eta_j,
	\end{equation}
	with $\eta_j \in \mathcal{N}(0,1)$. Instead of directly approximating $W(t)$ by using only the linear term in eq.~\eqref{eq:WtWZ} as in the classical Wong-Zakai approximation, we aim for a local approximation of a Brownian bridge.
	For this, we proceed by inserting eq.~\eqref{eq:WtWZ} into eq.~\eqref{eq:bbridge} and truncating the summation at a finite index $J$, an approximated Brownian bridge connecting $(t_0, Y_0), (t_0+\Delta T, Y_1)$ is given as
	\begin{equation}\label{eq:SBBconstruction}
		B^{(J)}(t) = Y_0 + \frac{t-t_0}{\Delta T}(Y_1-Y_0) + \frac{\sqrt{2\Delta T}}{\pi}\sum_{j=1}^{J} \frac{\sin(j\pi {(t-t_0)}/\Delta T)}{j} \eta_j
	\end{equation}
	with $Y_0 = W(t_0), Y_1 = W(t_0+\Delta T)$  denoting the respective values from the Wiener process. Their difference thus is a normally distributed random variable with variance $\sqrt{\Delta T}$, $(Y_1-Y_0) \sim \mathcal{N}(0,\Delta T)$. 
	As we consider the interval $[0, \Delta T]$, we have $Y_0 = 0$, thus the bridge simplifies to
	\begin{equation}
		B^{(J)}(t) = \frac{t\sqrt{\Delta T}}{\Delta T}\eta_0 + \frac{\sqrt{2\Delta T}}{\pi}\sum_{j=1}^{J} \frac{\sin(j\pi t/\Delta T)}{j} \eta_j,
	\end{equation}
	with $\eta_j \in \mathcal{N}(0,1), j=0,\dots,J$.
	Note that for the numerical solution of SDEs, the time interval $[t_n,t_{n+1}]$ being solved over can always be mapped to $[0, \Delta T]$, with $\Delta T = t_{n+1}-t_n$ such that the bridge construction for that interval is no restriction.
	This smooth approximation of the Brownian bridge is similar to the piecewise truncated spectral expansion of Brownian motion~\cite{ZhangKarniadakis2017}.

	\subsection{Transforming the SDE to a random ODE}
	
	Consider again the It\^{o}-SDE
	\begin{equation}\label{eq:scalar_Ito_SDE}
		dX_t = a(t,X_t) + b(t,X_t) dW_t, \quad X(0) = x_0,
	\end{equation}
	without loss of generality for $t\in [0,\Delta T]$, i.e., we consider only one timestep as motivated above.
	Inserting the smooth Browian bridge, let $X^{(J)}(t)$  be the solution of the differential equation
	\begin{equation}\label{eq:scalar_Ito_SDE_SBB}
		\frac{dX}{dt} = a(t,X) + b(t,X)\frac{dB^{(J)}}{dt}
	\end{equation}
	corresponding to the SDE~\eqref{eq:scalar_Ito_SDE}. As the Brownian bridge $B(t)$ is equivalent to the collection of all possible Wiener processes that pass through the sampled points, and $B^{(J)}(t) \rightarrow B(t)$ for $J\rightarrow \infty$, following~\cite{Oksendal2003} the solution $X^{(J)}(t)$ converges to an $X(t)$  for $J\rightarrow \infty$ uniformly in $t$ in bounded intervals for almost all realizations $\omega \in \Omega$ .
	It is long known~\cite{WongZakai1965,Sussmann1978} that this $X(t)$ coincides with the solution of the Stratonovich formulation of the SDE~\eqref{eq:ItoSDE}, i.e., the solution of eq.~\eqref{eq:StratSDE}. Thus, when seeking a solution to an It\^{o}-SDE, it has to be re-written in Stratonovich form in order to use this approximation.
	After this transformation, and using
	\begin{equation}
		\frac{dB^{(J)}}{dt} = \frac{\eta_0}{\sqrt{\Delta T}} + \frac{\sqrt{2\Delta T}}{\Delta T}\sum_{j=1}^{J}\cos\bigl(\frac{j\pi t}{\Delta T}\bigr)\eta_j,
	\end{equation}
	the resulting random ordinary differential equation (SBB-ODE) on the interval $[0,\Delta T]$ is
	\begin{align}\label{eq:SBB_ODE}
		\frac{dx}{dt} = a(t,x) - \frac{1}{2}b(t,x)\frac{\partial}{\partial x}b(t,x) + b(t,x)\biggl(\frac{\eta_0}{\sqrt{\Delta T}} + \frac{\sqrt{2}}{\sqrt{\Delta T}}\sum_{j=1}^{J}\cos\bigl(\frac{j\pi t}{\Delta T}\bigr)\eta_j\biggr)\\
		=\underline{a}(t,x) + b(t,x)\biggl(\frac{\eta_0}{\sqrt{\Delta T}} + \frac{\sqrt{2}}{\sqrt{\Delta T}}\sum_{j=1}^{J}\cos\bigl(\frac{j\pi t}{\Delta T}\bigr)\eta_j\biggr)\label{eq:SBB_ODE2}
	\end{align}
	with $x(0) = X_0$. 
	In the following, we consider only one macro time step, the interval $[0,\Delta T]$. To continue timestepping on an interval $[n\Delta T, (n+1)\Delta T]$, the obvious changes are made, i.e., using $x(n\Delta T)$ computed on the previous time step as initial value, and evaluating the coefficient functions $a, b$ at the correct times. Note that this equation still contains a stochastic component due to the random variables $\eta_j$, which are, however, fixed on the time step.

	\subsection{Initialization}
	
	The first step of SDC is to generate an approximate solution to the random ODE~\eqref{eq:SBB_ODE} at the $M + 1$ quadrature nodes $\tau_m$. 
	With SDEs, besides a constant initialization (spread of initial value to all quadrature nodes), there are two options that can be considered in determining the initial approximation. The first is to apply the Euler method to the SBB-ODE approximation, the second is to generate the initial approximation by applying the Euler-Maruyama method to the \Ito \ form of the SDE on the time steps, combined with interpolation to the intermediate quadrature nodes. 
	
	\paragraph{Euler for the SBB-ODE.} Starting from the formulation~\eqref{eq:SBB_ODE} using the smooth Brownian bridge (SBB) approximation, we apply a standard forward Euler method and obtain
	\begin{multline}
		x^0_{m+1} = x^0_m + (\tau_{m+1} - \tau_m)\biggl(a(\tau_m, x^0_m) - \half b(\tau_m, x^0_m)\frac{\partial}{\partial x}b(\tau_m, x^0_m) \\ + b(\tau_m,x^0_m)\biggl(\frac{\eta_0}{\sqrt{\Delta T}} + \frac{\sqrt{2}}{\sqrt{\Delta T}}\sum_{j=1}^{J}\cos\bigl(\frac{j\pi \tau_m}{\Delta T}\bigr)\eta_j\biggr) \biggr),\ m=0,\dots,M-1.
	\end{multline}
	
	\paragraph{Euler-Maruyama for the \Ito-SDE with interpolation.} Here, we take one step of the Euler-Maruyama method for the original \Ito-SDE~\eqref{eq:scalar_Ito_SDE} to get the value 
	\[ x^0_M = x_0 +  a(t_0, x_0) \Delta T + b(t_0, x_0) \Delta W, \]
	where $\Delta W = W(\tau_M) - W(0) \in \mathcal{N}(0,\tau_M)$. With this, the values at the intermediate quadrature nodes are obtained by linear interpolation,
	\[
	x^0_m = x_0 + \biggl(a(0,x_0) + b(0,x_0)\Delta W\biggr) \frac{\tau_m}{\tau_M},\ m=1,\dots, M-1.
	\]
	As Euler-Maruyama has strong convergence order $\tfrac{1}{2}$, this initial approximation has order $\tfrac{1}{2}$ as well. Note that we cannot use Euler-Maruyama directly to initialize $x^0_m,\ m=1,\dots M-1$, as we require an initialization for the ODE~\eqref{eq:SBB_ODE}, and $W(t)$ and $B^{(J)}(t)$ are equivalent only at $t = n\Delta T, n=0,1,\dots$, not at the quadrature nodes. A short calculation shows that this would induce an additional local error of order $\mathcal{O}(\Delta T)$ as the random variables in the smooth Brownian bridge approximation are constant on the time steps $[n\Delta T, (n+1)\Delta T]$, thus leading to a non-convergent initialization.
	
	\subsection{Correction Sweeps}
	
	Consider the integral form of the random ODE~\eqref{eq:SBB_ODE2},
	\begin{equation}
		x(t) = x_0 +\int_0^t \underline{a}(\tau,x) + b(\tau,x)\frac{d}{d\tau} B^{(J)}(\tau) \textup{d} \tau, 
	\end{equation}
	where due to the \Ito-Stratonovich correction $\underline{a}(t,x) = a(t, x) - \tfrac{1}{2} b(t, x)\frac{\partial}{\partial x}b(t, x).$
	With an approximate solution $\hat{x}(t)$, the error is given by
	\begin{equation}
		E(t,\hat{x}(t)) = x_0+\int_{0}^{t}\underline{a}(\tau,\hat{x}) + b(\tau,\hat{x})\frac{d}{d\tau} B^{(J)}(\tau) \textup{d} \tau - \hat{x}(t),
	\end{equation}
	and a correction $\delta(t) = x(t)-\hat{x}(t)$ can be computed as
	\begin{multline}
		\delta(t) = \int_{0}^{t}\underline{a}(s,\hat{x}+\delta) + b(s,\hat{x}+\delta)\frac{d}{ds} B^{(J)}(s) -\bigl(\underline{a}(s,\hat{x}) + b(s,\hat{x})\frac{d}{ds} B^{(J)}(s)\bigr) \textup{d}s \\+ E(t,\hat{x}).
	\end{multline}
	Stepping through the quadrature nodes using the explicit Euler scheme, the correction sweep $k$ can be computed as
	\begin{multline}\label{eq:correctionsweep}
		\delta^k_{m+1} = \delta^k_m + \Delta \tau_m\biggl( \underline{a}(\tau_m,\hat{x}^k_m+\delta^k_m) - \underline{a}(\tau_m,\hat{x}^k_m) + b(\tau_m,\hat{x}^k_m+\delta^k_m)\frac{d}{ds} B^{(J)}(\tau_m) \\- b(\tau_m,\hat{x}^k_m)\frac{d}{ds} B^{(J)}(\tau_m) \biggr) + E(\tau_{m+1},\hat{x}^k) - E(\tau_m, \hat{x}^k),
	\end{multline}
	where $\Delta \tau_m = \tau_{m+1}-\tau_m$, $\delta^0_0=0$, and $\hat{x}^k_m$ denotes the approximate solution at quadrature node $m$ in sweep $k$. The generalization to implicit Euler or IMEX schemes, e.g., for stiff problems, is straightforward. Let $I_m^{m+1}(\hat{x}^k) = I_D(\hat{x}^k) + I_S(\hat{x}^k)$, where $I_D, I_S$ are the spectral quadrature approximations of the deterministic and stochastic parts of the problem,
	\[
	I_D(\hat{x}^k) \approx \int_{\tau_m}^{\tau_{m+1}} \underline{a}(s,\hat{x}^k(s))\textup{d}s,\quad
	I_S(\hat{x}^k) \approx \int_{\tau_m}^{\tau_{m+1}} b(s,\hat{x}^k(s))\frac{d}{ds} B^{(J)}(s)\textup{d}s.
	\]
	With $\hat{x}^{k+1}_m = \hat{x}^k_m + \delta^k_m$ and 
	\[
	E(\tau_{m+1},\hat{x}^k) - E(\tau_{m},\hat{x}^k) = I_m^{m+1}(\hat{x}^k) - \hat{x}^k_{m+1} + \hat{x}^k_{m},
	\]
	the final correction scheme is given as
	\begin{multline} \label{corrScheme}
		\hat{x}^{k+1}_{m+1} = \hat{x}_m^{k+1} + \Delta \tau_m \biggl( \underline{a}(\tau_m,\hat{x}^{k+1}_m) - \underline{a}(\tau_m,\hat{x}^k_m) \\+ \bigl(b(\tau_m,\hat{x}^{k+1}_m)- b(\tau_m,\hat{x}^k_m)\bigr)\frac{d}{ds} B^{(J)}(\tau_m) \biggr) + I_m^{m+1}(\hat{x}^k).
	\end{multline}

	\subsection{Extension to multidimensional problems}
	\label{subsec:Multidim}
	
	Consider the vectorial \Ito-SDE with 
	$R$ independent scalar Wiener processes $W^r$, 
	\begin{equation}\label{eq:vector_Ito_SDE}
		\textup{d}\mathbf{X}_t = a(t,\mathbf{X}_t) \textup{d}t + \sum_{r=1}^R b_r(t,\mathbf{X}_t) \textup{d}W^r_t
	\end{equation}
	where $\mathbf{X} \in \R^d$, $W^r \in \R$, $a : \R \times \R^d \rightarrow \R$, and $b_r : \R \times \R^d \rightarrow \R^d$. 
	The modified drift term for the Stratonovich-SDE is given componentwise by 
	\begin{equation}
		\underline{a}^i(t,\mathbf{X}_t) = a^i(t,\mathbf{X}_t) -\half \revbm{\sum_{r=1}^R} \sum_{j=1}^{d}b_r^j(t,\mathbf{X}_t)\frac{\partial}{\partial \mathbf{X}^j}b^i_r(t,\mathbf{X}_t),\quad i=1,\dots,d.
	\end{equation}
	Assuming that the $b_r$ commute, i.e.,
	\begin{equation}
		\sum_{j=1}^d b_r^j \frac{\partial}{\partial \mathbf{X}^j}b_s(t,\mathbf{X}_t) = \sum_{j=1}^d b_s^j \frac{\partial}{\partial \mathbf{X}^j}b_r(t,\mathbf{X}_t)\quad \forall r,s=1,\dots, R,
	\end{equation}
	then again, by replacing the Wiener processes $W_r(t)$ by a smooth approximation $B^{(J)}_r(t)$ of a Brownian bridge $B_r(t)$  with $B^{(J)}_r(t) \rightarrow B_r(t)$, passing through the sampled values of the Wiener process at given timesteps, the resulting ODE approximation of the \Ito-SDE~\eqref{eq:vector_Ito_SDE} converges to a solution of the corresponding Stratonovich-SDE~\cite{Sussmann1978}. Commutativity is a usual assumption when dealing with smooth approximations of Brownian motion to show convergence to the Stratonovich-SDE: The limiting solution for $J\to\infty$ depends on the limiting Levy areas involving the commutator $[b_r, b_s] = \frac{\partial b_r(x)}{\partial x} b_s(x) - \frac{\partial b_s(x)}{\partial x} b_r(x) $, which then will occur in the SDE. This is avoided by commutative noise, for which $[b_r, b_s] =0$ \cite{Sussmann1978}.
	
	\section{Convergence Analysis}
	\label{section:ConvResults}
	
	To analyze the convergence behavior of the SDC-SBB method we have to distinguish three contributions to the approximation error. Considering a single realization of the driving Wiener process $W(t)$ and a uniform time discretization with time step $\Delta T$, we consider the error a singe time interval $[0,\Delta T]$ of the true solution $X(t)$ and its approximation $\hat{x}_m$ at quadrature nodes $\tau_m, m = 0,\dots,M$.} Error contributions are $\varepsilon^\text{SBB}$, the error coming from the smooth Brownian bridge approximation of Brownian motion, the SDC iteration error $\varepsilon^\text{SDC}$, and the time discretization error $\varepsilon^{\Delta T}$. 
	It is well known that for ODEs SDC converges to the collocation solution, if it converges at all; as the SDC iteration error can in principle be reduced to machine precision by the correction sweeps we neglect it in the following discussion. For a less cluttered notation, we do not explicity state the time-dependency of the right-hand side functions $a(t,x), b(t,x), f(t,x)$ in this section, and just write $a(x), b(x), f(x)$.
	
	\subsection{Time discretization} 
	
	For analyzing time discretization error and convergence order of SDC, we follow~\cite{CausleySeal2017}. Consider the error at the collocation nodes $\tau_m$ on the time interval $[0,\Delta T]$.
	The SDC update in sweep $k+1$ described above is, in abstract form, given as 
	\begin{equation}\label{eq:abstract_sdc}
	\begin{split}
		\hat{x}_{m}^{k+1} = \hat{x}_{m-1}^{k+1}  + \Delta \tau_{m-1}\bigl(f(\hat{x}_{m-1}^{k+1})-f(\hat{x}_{m-1}^{k})\bigr) &+ \Delta T \sum_{n=0}^{M} w_{m,n} f(\hat{x}_n^{k}),\\
		&m=1,\dots,M,
	\end{split}
	\end{equation}
	with $f$ denoting the generic right-hand side of the abstract initial value problem in Picard integral form
	\begin{equation}\label{eq:abstract_picard}
	x(t) = x_0 + \int_{0}^{t} f(x(s))\,ds, \ t>0.
	\end{equation}
	Considering collocation nodes $\tilde{\tau}_0,\dots,\tilde{\tau}_M$ on the unit interval $[0,1]$, the quadrature weights are defined by 
	\begin{equation}
	w_{m,n} = \int_{\tilde\tau_{m-1}}^{\tilde\tau_m} \ell_n(s)\,ds,\ m=1,\dots,M, n=0,\dots,M,
	\end{equation}
	where $\ell_n(t)$ is the Lagrange interpolation polynomial of degree $\leq M$ corresponding to the quadrature node $\tilde{\tau}_n$. The nodes on an arbitrary interval $[t_0,t_0+\Delta T]$ are then given as $\tau_n = t_0 + \tilde{\tau}_n\,\Delta T$.
	Following~\cite{CausleySeal2017} and subtracting the Picard equation~\eqref{eq:abstract_picard} from the SDC iteration equation~\eqref{eq:abstract_sdc}, we get the discrete error evolution 
	\begin{equation}\label{eq:err_recursion}
	e_m^{k+1} = e_{m-1}^{k+1} + \Delta \tau_{m-1}\bigl(f(\hat{x}_{m-1}^{k+1}) - f(\hat{x}_{m-1}^{k})\bigr) + \int_{\tau_{m-1}}^{\tau_m} \mathcal{I}(f(\hat{\mathbf{x}}^k))(t) - f(x(t))\, dt,
	\end{equation}
	for the error $e_m^k = \hat{x}_m^k - x(\tau_m)$, where $\hat{\mathbf{x}} = (\hat{x}_{0},\dots,\hat{x}_{M})$ denotes the vector of the approximate solution at the collocation nodes, and the superscript $k$ indicates the $k$-th sweep. The interpolation operator $\mathcal{I}$ is defined by 
	\begin{equation}
	\mathcal{I}(f(\hat{\mathbf{x}}))(t) = \sum_{n=0}^{M}f(\hat{x}_n) \ell_n(t/\Delta T),
	\end{equation}
	giving the identity  
	\begin{equation}
	\int_{\tau_{m-1}}^{\tau_m} \mathcal{I}(f(\hat{\mathbf{x}}))(t) \, dt = \Delta T\sum_{n=0}^{M}w_{m,n} f(\hat{x}_n).
	\end{equation}
	This yields the integration error 
	\begin{equation}
	\begin{split}
		\left|\int_{\tau_{m-1}}^{\tau_m} \mathcal{I}(f(\hat{\mathbf{x}}^k))(t) - f(x(t))\, dt\right| &\leq \left|\int_{\tau_{m-1}}^{\tau_m} \mathcal{I}(f(\hat{\mathbf{x}}^k))(t) - \mathcal{I}(f(\mathbf{x}))(t)\, dt\right| \\
		&+\left|\int_{\tau_{m-1}}^{\tau_m} \mathcal{I}(f(\mathbf{x}))(t) - f(x(t))\, dt\right|
		\label{eq:interp}
	\end{split}
	\end{equation}
	by adding and subtracting the Lagrange interpolant $\mathcal{I}(f(\mathbf{x}))$, where $\mathbf{x} = (x(\tau_0), \dots, x(\tau_m))$.
	For the first term we get 
	\begin{equation}\label{eq:interpol_integral_error}
	\begin{split}
		\left|\int_{\tau_{m-1}}^{\tau_m} \mathcal{I}(f(\hat{\mathbf{x}}^k))(t) - \mathcal{I}(f(\mathbf{x}))(t)\, dt\right| &=
		\left| \Delta T\sum_{n=0}^{M}w_{m,n} \bigl(f(\hat{x}_n)-f(x(\tau_n))\bigr)\right| \\
		&\leq \Delta T \sum_{n=0}^{M}|w_{m,n}| |f(\hat{x}_n)-f(x(\tau_n))|.
	\end{split}
	\end{equation}
	\begin{proposition}\label{prop:firstintegralbnd}
	Let the right-hand side $f(x)$ be given by using only the linear part of the Brownian bridge approximation, i.e., $f(x) = \underline{a}(x) + b(x) \frac{\eta_0}{\sqrt{\Delta T}}$. Assume further, that the modified drift as well as the diffusion coefficient functions $\underline{a}, b$ are Lipschitz continuous with constants $L_{\underline{a}}, L_b$. Let $\Lambda_m = \sum_{n=0}^{M}\int_{\tilde\tau_{m-1}}^{\tilde\tau_m}|\ell_n(s)|\, ds$.  Then 
	\begin{equation}
		\left|\int_{\tau_{m-1}}^{\tau_m} \mathcal{I}(f(\hat{\mathbf{x}}^k))(t) - \mathcal{I}(f({\mathbf{x}}))(t)\, dt\right| \leq (\Delta T L_{\underline{a}} + \sqrt{\Delta T} L_b |\eta_0|) \Lambda_m \max_{n} |\hat{x}_n - x(\tau_n)|.
	\end{equation}
	\end{proposition}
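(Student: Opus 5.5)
The plan is to start from the identity and bound already established in \eqref{eq:interpol_integral_error}. They reduce the left-hand side to $\Delta T \sum_{n=0}^{M}|w_{m,n}|\,|f(\hat{x}_n^k)-f(x(\tau_n))|$. What remains is to (i) bound each difference $|f(\hat{x}_n^k)-f(x(\tau_n))|$ using the special form of $f$ together with the Lipschitz assumptions, and (ii) bound the sum of the weights $|w_{m,n}|$ by $\Lambda_m$.

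For step (i), insert $f(x) = \underline{a}(x) + b(x)\eta_0/\sqrt{\Delta T}$ and apply the triangle inequality:
\[
|f(\hat{x}_n^k)-f(x(\tau_n))| \leq |\underline{a}(\hat{x}_n^k)-\underline{a}(x(\tau_n))| + \frac{|\eta_0|}{\sqrt{\Delta T}}\,|b(\hat{x}_n^k)-b(x(\tau_n))|
\leq \Bigl(L_{\underline{a}} + \frac{|\eta_0|}{\sqrt{\Delta T}} L_b\Bigr)|\hat{x}_n^k - x(\tau_n)|.
\]
Two features matter here. First, $\eta_0$ is a fixed random variable on the time step, so the argument is pathwise for fixed $\omega$ and no expectation is taken. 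Second, the time dependence of $\underline{a}$ and $b$ is suppressed, so the same Lipschitz constants apply uniformly in $t$. Next, replace each $|\hat{x}_n^k - x(\tau_n)|$ by $\max_n|\hat{x}_n - x(\tau_n)|$ and pull this maximum, together with the constant in parentheses, out of the sum.

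For step (ii), use the definition $w_{m,n} = \int_{\tilde\tau_{m-1}}^{\tilde\tau_m}\ell_n(s)\,ds$. Moving the absolute value inside the integral gives $|w_{m,n}| \leq \int_{\tilde\tau_{m-1}}^{\tilde\tau_m}|\ell_n(s)|\,ds$, and summing over $n$ yields $\sum_n |w_{m,n}| \leq \Lambda_m$.

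Combining the two steps, the bound is $\Delta T\,(L_{\underline{a}} + L_b|\eta_0|/\sqrt{\Delta T})\,\Lambda_m \max_n|\hat{x}_n-x(\tau_n)|$. Since $\Delta T/\sqrt{\Delta T} = \sqrt{\Delta T}$, this simplifies to $(\Delta T L_{\underline{a}} + \sqrt{\Delta T}L_b|\eta_0|)\Lambda_m\max_n|\hat{x}_n-x(\tau_n)|$, which is the claim.

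There is no real obstacle; the only points needing care are bookkeeping. The factor $\Delta T$ from the rescaling to the unit interval has to be combined correctly with the $1/\sqrt{\Delta T}$ in front of $\eta_0$. One must also note that the restriction to the linear part of the bridge is exactly what makes $f$ independent of $t$ apart from $\underline{a}$ and $b$. With the cosine terms present, their pointwise values at the nodes would enter, and the bound would instead carry a factor of order $\sqrt{J}$ in place of $|\eta_0|$.
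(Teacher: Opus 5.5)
Your proposal is correct and follows the paper's proof essentially step for step. Both use the Lipschitz bound $|f(\hat{x}_n)-f(x(\tau_n))|\leq (L_{\underline{a}}+L_b|\eta_0|/\sqrt{\Delta T})\max_n|\hat{x}_n-x(\tau_n)|$, insert it into \eqref{eq:interpol_integral_error}, and use $|w_{m,n}|\leq\int_{\tilde\tau_{m-1}}^{\tilde\tau_m}|\ell_n(s)|\,ds$.
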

	
	\begin{proof}
	Using Lipschitz continuity we get
	\begin{equation}
		\begin{split}
			\left|f(\hat{x}_n)-f(x(\tau_n))\right| &= \left| \underline{a}(\hat{x}_n) -\underline{a}(x(\tau_n)) + \bigl(b(\hat{x}_n) - b(x(\tau_n))\bigr)\frac{dB^{(0)}(t)}{dt}\right| \\
			&\leq \max_{n} \left|\hat{x}_n - x(\tau_n)\right|\, (L_{\underline{a}} + L_b |\eta_0|/\sqrt{\Delta T})\label{eq:lipschitz}.
		\end{split}
	\end{equation}
	Inserting this into~\eqref{eq:interpol_integral_error}  and using $|w_{m,n}| \leq \int_{\tilde\tau_{m-1}}^{\tilde\tau_m}|\ell_n(s)|\, ds$ yields the desired estimate.
	\end{proof}

	\begin{remark}
	In contrast to~\cite[Lemma 2.1]{CausleySeal2017}, due to the Brownian bridge approximation we get the $\sqrt{\Delta T}$ term multiplying the error from the previous SDC iteration instead of $\Delta T$. Thus, SDC iterations for the SBB approximation only gain \emph{half} an order per correction iteration. Choosing an arbitrary but fixed $J>0$ does not change the estimate significantly, as it only modifies~\eqref{eq:lipschitz} to 
	\begin{equation}
		\max_{n} \left|\hat{x}_n - x(\tau_n)\right|\, (L_{\underline{a}} + L_b |\eta_0|/\sqrt{\Delta T} + \sum_{j=1}^J L_b |\eta_j| \sqrt{2}/\sqrt{\Delta T}).
	\end{equation}
	\end{remark}
	The estimate for the second integral in~\eqref{eq:interp} depends on the smoothness of $f$ and the choice of the quadrature nodes. 
	\begin{proposition}\label{prop:secondintegralbnd}
	Consider again the linear part of the SBB approximation only, $(f\circ x)(t) = \underline{a}(x(t)) + b(x(t)) \frac{\eta_0}{\sqrt{\Delta T}},$ and assume  $f\circ x \in C^{M+1}([0,\Delta T])$. Then
	\begin{equation}\label{eq:secondIntegralBnd}
		\left|\int_{\tau_{m-1}}^{\tau_m} \mathcal{I}(f(\mathbf{x}))(t) - f(x(t))\, dt\right| \leq \frac{\tilde{F}}{(M+1)!} \Delta T^{\half(M+2)}.
	\end{equation}
	\end{proposition}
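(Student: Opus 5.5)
The plan is to use the standard Lagrange interpolation remainder on $[0,\Delta T]$ and then track how the derivatives of $f\circ x$ scale with $\Delta T$. Write $g(t) = (f\circ x)(t) = \underline{a}(x(t)) + b(x(t))\,\eta_0/\sqrt{\Delta T}$, where $x$ solves the (linear-SBB) random ODE $x' = g(t)$. Since $\mathcal{I}(f(\mathbf{x}))$ is the degree-$\le M$ interpolant of $g$ at the $M+1$ nodes $\tau_0,\dots,\tau_M \in [0,\Delta T]$, the Cauchy remainder formula gives, for each $t\in[\tau_{m-1},\tau_m]$,
\[
g(t) - \mathcal{I}(f(\mathbf{x}))(t) = \frac{g^{(M+1)}(\xi_t)}{(M+1)!}\prod_{n=0}^{M}(t-\tau_n).
\]
Integrating over $[\tau_{m-1},\tau_m]$ and using $|t-\tau_n|\le \Delta T$ together with $\tau_m-\tau_{m-1}\le\Delta T$ gives the bound
\[
\frac{\Delta T^{M+2}}{(M+1)!}\,\max_{[0,\Delta T]}|g^{(M+1)}|.
\]
A nodal constant such as $\int_{\tilde\tau_{m-1}}^{\tilde\tau_m}\prod_n|s-\tilde\tau_n|\,ds\le 1$ can be absorbed into $\tilde F$.

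The key step is to bound $g^{(M+1)}$ in terms of $\Delta T$. The only large parameter is $\beta := \eta_0/\sqrt{\Delta T}$. Differentiating $g$ by the chain rule with $x'=g$ gives $g' = (\underline{a}'+\beta b')\,g$, and in general $g^{(k)}$ is a polynomial in $\beta$ whose coefficients are derivatives of $\underline{a},b$ evaluated at $x(t)$. Each differentiation contributes one factor $x'=g=O(1+|\beta|)$ and possibly one more factor $\beta$ through $b$, so crudely $g^{(k)}$ is of degree $k+1$ in $\beta$. That crude count gives $|g^{(M+1)}| \le C(1+|\eta_0|)^{M+2}\Delta T^{-(M+2)/2}$, and multiplying by $\Delta T^{M+2}$ yields $\Delta T^{(M+2)/2}$, which is exactly the claimed rate $\Delta T^{\frac12(M+2)}$. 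The argument therefore defines $\tilde F := C(1+|\eta_0|)^{M+2}\cdot(\text{nodal constant})$, where $C$ depends on bounds for the derivatives of $\underline{a}$ and $b$ up to order $M+1$ along the trajectory. This constant is random through $\eta_0$ but has finite moments, which is what later strong-error estimates need.

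The main obstacle is making this degree count rigorous. I would do it by induction, proving that $g^{(k)}(t) = \sum_{i=0}^{k+1} c_{k,i}(x(t))\,\beta^i$ with each $c_{k,i}$ a polynomial in derivatives of $\underline{a},b$ up to order $k$. The inductive step uses $\frac{d}{dt}c(x(t)) = c'(x)\,g = c'(x)(\underline{a}+\beta b)$, which raises the $\beta$-degree by exactly one. A cleaner alternative is to rescale time, $t=\Delta T s$. Then $\tilde g(s)=g(\Delta T s)$ satisfies $\frac{d}{ds}x = \Delta T\,\underline{a} + \sqrt{\Delta T}\,\eta_0 b$, which is an $O(\sqrt{\Delta T})$ vector field, so $\frac{d^{k}}{ds^{k}}\tilde g = O(\Delta T^{k/2})$ relative to the size $O(\Delta T^{-1/2})$ of $g$. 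Undoing the scaling gives the same exponent. Either route needs $\underline{a},b\in C^{M+1}$ with bounded derivatives along the path, which I would state as the interpretation of the hypothesis $f\circ x\in C^{M+1}$ with the constant $\tilde F$ bounding $(1+|\eta_0|)^{-(M+2)}\Delta T^{(M+2)/2}\max|g^{(M+1)}|$ uniformly in $\Delta T$.
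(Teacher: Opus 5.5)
Your proposal is correct and follows the same route as the paper: the Lagrange remainder with $\prod_{n}|t-\tau_n|\le\Delta T^{M+1}$, one more factor $\Delta T$ from integrating over $[\tau_{m-1},\tau_m]$, and the key scaling $\|(f\circ x)^{(M+1)}\|_\infty\le\tilde F\,\Delta T^{-\frac12(M+2)}$. Your induction on the degree in $\beta=\eta_0/\sqrt{\Delta T}$ (each time derivative contributes a factor $x'=\underline{a}+\beta b$) actually proves this scaling, which the paper only asserts after computing the first derivative; it also makes explicit that, for $\Delta T\le 1$, $\tilde F$ depends on $\eta_0$ through $(1+|\eta_0|)^{M+2}$ and on bounds for $\underline{a}$, $b$ and their derivatives along the trajectory.
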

	\begin{proof}
	With
	\begin{align*}
		\frac{d}{dt}(f\circ x) &=  \frac{d\underline{a}(x)}{dx} \frac{dx}{dt} + \frac{db(x)}{dx} \frac{dx}{dt}\\
		&= \frac{d\underline{a}(x)}{dx}\left(\underline{a}(x) + b(x) \frac{\eta_0}{\sqrt{\Delta T}}\right) +  \frac{db(x)}{dx}\left(\underline{a}(x) + b(x) \frac{\eta_0}{\sqrt{\Delta T}}\right)\frac{\eta_0}{\sqrt{\Delta T}},
	\end{align*}
	the $M+1$-st derivative is bounded by
	\begin{equation}
		\norm{\frac{d^{M+1}}{dt^{M+1}}(f\circ x)}_\infty \leq \frac{\tilde{F}}{(\Delta T)^{\half(M+2)}},
	\end{equation}
	with $\tilde{F}$ independent of $\Delta T$. This yields for the interpolation error
	\begin{equation}
		\begin{split}
			\left|\mathcal{I}(f(\mathbf{x}))(t) - f(x(t))\right| \leq \frac{\tilde{F}}{(M+1)!}(\Delta T)^{-\half(M+2)} \prod_{n=0}^{M} |t-\tau_n| \\
			\leq \frac{\tilde{F}}{(M+1)!}(\Delta T)^{-\half(M+2)} (\Delta T)^{M+1} = \frac{\tilde{F}}{(M+1)!}(\Delta T)^{\half M},
		\end{split}
	\end{equation}
	and integrating gives another factor $\Delta T$ and thus yields the claimed estimate.
	\end{proof}
	
	\begin{remark}
	Standard interpolation error estimates assuming $f\circ x \in C^{M+1}([0,\Delta T])$ and $\norm{\frac{d^{M+1}}{dt^{M+1}}(f\circ x)}_\infty \leq F$ would lead to the bound
	\begin{equation}
		\left|\int_{\tau_{m-1}}^{\tau_m} \mathcal{I}(f(\mathbf{x}))(t) - f(x(t))\, dt\right| \leq \frac{F}{{(M+1)}!}\Delta T^{M+{2}},
	\end{equation}
	see, e.g.,~\cite{CausleySeal2017}. 
	In contrast to this, due to the SBB approximation, the maximum order of the method is halved here.
	\end{remark}
	
	With these estimates, following closely the remaining convergence order proof of~\cite[Thm.~3.1, Corr.~3.3]{CausleySeal2017}, we arrive at the final estimate for the error of a single step of explicit SDE-SBB.
	\begin{theorem}
	Under the conditions Propositions~\ref{prop:firstintegralbnd} and~\ref{prop:secondintegralbnd}, the error for explicit SDE-SBB satisfies
	\begin{equation}
		\bigl| e_n^{k+1} \bigr| \leq  
		C_1 \sqrt{\Delta T} \| \mathbf{e}^k \| + C_2\Delta T^{\frac{1}{2}(M+2)},
	\end{equation}
	with constants $C_1, C_2$ and $\mathbf{e}^k = (e_0^k, \dots, e_{M}^k)^T.$ 
	\end{theorem}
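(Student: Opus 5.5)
We start from the discrete error recursion \eqref{eq:err_recursion} and unroll it in $m$, in the style of \cite[Thm.~3.1]{CausleySeal2017}. Since $e_0^{k+1}=0$ (the initial value is exact on the step), summing the recursion from $1$ to $m$ gives
\begin{equation*}
e_m^{k+1} = \sum_{i=1}^{m} \Delta\tau_{i-1}\bigl(f(\hat{x}_{i-1}^{k+1}) - f(\hat{x}_{i-1}^{k})\bigr) + \sum_{i=1}^{m}\int_{\tau_{i-1}}^{\tau_i} \mathcal{I}(f(\hat{\mathbf{x}}^k))(t) - f(x(t))\,dt .
\end{equation*}
We bound the integral terms by splitting each one as in \eqref{eq:interp}. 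Proposition~\ref{prop:firstintegralbnd} bounds the first part by $(\Delta T L_{\underline{a}} + \sqrt{\Delta T}L_b|\eta_0|)\Lambda_i\|\mathbf{e}^k\|$, with $\|\cdot\|$ the max norm. Proposition~\ref{prop:secondintegralbnd} bounds the second part by $\tilde F\,\Delta T^{\frac12(M+2)}/(M+1)!$. Summing over at most $M$ subintervals only changes constants, since $M$ is fixed and $\Lambda_i$ depends only on the normalized nodes.

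For the explicit Euler difference term, we write $f(\hat x^{k+1}_{i-1})-f(\hat x^{k}_{i-1}) = f(\hat x^{k+1}_{i-1})-f(x(\tau_{i-1})) - \bigl(f(\hat x^{k}_{i-1})-f(x(\tau_{i-1}))\bigr)$. Using the Lipschitz bound \eqref{eq:lipschitz} and $\Delta\tau_{i-1}\le\Delta T$, this term is at most
\begin{equation*}
(\Delta T L_{\underline{a}} + \sqrt{\Delta T}L_b|\eta_0|)\bigl(|e^{k+1}_{i-1}|+|e^k_{i-1}|\bigr).
\end{equation*}
Writing $\kappa := \Delta T L_{\underline{a}} + \sqrt{\Delta T}L_b|\eta_0| = \mathcal{O}(\sqrt{\Delta T})$, we arrive at
\begin{equation*}
|e_m^{k+1}| \le \kappa\sum_{i=1}^{m-1}|e_i^{k+1}| + \kappa\Bigl(M+\sum_i\Lambda_i\Bigr)\|\mathbf{e}^k\| + \frac{M\tilde F}{(M+1)!}\Delta T^{\frac12(M+2)} .
\end{equation*}

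The remaining step is a discrete Gronwall argument over the fixed number $M$ of nodes. Either by induction on $m$, or directly from $|e_m^{k+1}|\le \kappa\sum_{i<m}|e_i^{k+1}| + R$, we get $|e_m^{k+1}|\le (1+\kappa)^{m-1}R\le (1+\kappa)^M R$. For $\Delta T\le 1$ and $|\eta_0|$ treated as a fixed realization, $(1+\kappa)^M$ is bounded by a constant. This yields the claim with $C_1 = (1+\kappa)^M\bigl(L_{\underline a}+L_b|\eta_0|\bigr)\bigl(M+\sum_i\Lambda_i\bigr)$ and $C_2=(1+\kappa)^M M\tilde F/(M+1)!$.

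The main obstacle is the Gronwall step, because $\kappa$ carries the factor $|\eta_0|$, which is not uniformly bounded over realizations. The plan is to state the result pathwise, with $C_1$ and $C_2$ depending on $\eta_0$. For $\Delta T$ small enough that $\kappa\le1$, we have $(1+\kappa)^M\le 2^M$; the constants then depend on $\eta_0$ only polynomially, which is harmless for later moment bounds. The constant $\tilde F$ from Proposition~\ref{prop:secondintegralbnd} likewise depends polynomially on $\eta_0$, and we absorb it into $C_2$.
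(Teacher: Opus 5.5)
Your proposal is correct and follows essentially the same route as the paper. Both start from the error recursion \eqref{eq:err_recursion}, bound the quadrature term with Propositions~\ref{prop:firstintegralbnd} and~\ref{prop:secondintegralbnd}, bound the Euler term via the Lipschitz splitting $|\hat x^{k+1}_{m-1}-\hat x^{k}_{m-1}|\le |e^{k+1}_{m-1}|+|e^{k}_{m-1}|$, and finish with a discrete Gronwall step over the $M$ nodes; the differences are only cosmetic (you telescope first and prove the Gronwall bound inline, whereas the paper keeps the one-step inequality $|e_m^{k+1}|\le(1+\mathcal{L})|e_{m-1}^{k+1}|+\dots$ and cites Corollary~2.3 of \cite{CausleySeal2017}), and you make explicit the pathwise dependence of $C_1,C_2$ on $\eta_0$ that the paper leaves implicit in $\mathcal{L}\le C\sqrt{\Delta T}$.
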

	
	\begin{proof}
	Starting from eq.~\eqref{eq:err_recursion}, we estimate the individual terms in the error recursion
	\begin{equation}
		|e_m^{k+1}| \leq | e_{m-1}^{k+1}| + \underbrace{|\Delta \tau_{m-1}\bigl(f(\hat{x}_{m-1}^{k+1}) - f(\hat{x}_{m-1}^{k})\bigr)|}_{=: \Delta f} + \underbrace{\biggl| \int_{\tau_{m-1}}^{\tau_m} \mathcal{I}(f(\hat{\mathbf{x}}^k))(t) - f(x(t))\, dt\biggl|}_{=: I}.
	\end{equation}
	Setting $\Lambda := \max_{0\leq n\leq M} \Lambda_n$, $\|\mathbf{e}^k\| = \max_{0\leq m \leq M} |\hat{x}_m^k-x(\tau_m)|,$ and abbreviating $\mathcal{L} := \Delta T L_a+\sqrt{\Delta T}L_b|\eta_0|$, we use Propositions~\ref{prop:firstintegralbnd} and~\ref{prop:secondintegralbnd} as well as eq.~\eqref{eq:interp} and~\eqref{eq:lipschitz} to obtain
	\begin{equation}
		I  \leq \mathcal{L}\Lambda \| \mathbf{e}^k\| + \frac{\tilde F}{(M+1)!} \Delta T^{\half(M+2)} \quad \text{and}\quad
		\Delta f \leq \mathcal{L} |\hat{x}^{k+1}_{m-1}-\hat{x}^{k}_{m-1}|.
	\end{equation}
	With $\Delta \tau_{m-1} \leq \Delta T$, 
	\begin{equation}
		|\hat{x}^{k+1}_{m-1}-\hat{x}^{k}_{m-1}| =  |\hat{x}^{k+1}_{m-1} - x(\tau_{m-1}) + x(\tau_{m-1})-\hat{x}^{k}_{m-1}| \leq |e^{k+1}_{m-1}| + |e^k_{m-1}|,
	\end{equation}
	and $|e^k_{m-1}|\leq \|\mathbf{e}^k\|$, we get
	\begin{equation}
		\begin{split}
			|e_m^{k+1}| & \leq |e^{k+1}_{m-1}| + \mathcal{L}(|e^{k+1}_{m-1}|+|e^{k}_{m-1}|)+\mathcal{L}\Lambda \|\mathbf{e}^k\| + \frac{\tilde F}{(M+1)!} \Delta T^{\half(M+2)}\\
			&= (1+\mathcal{L}) |e^{k+1}_{m-1}| + \mathcal{L}|e^{k}_{m-1}|+\mathcal{L}\Lambda \|\mathbf{e}^k\| + \frac{\tilde F}{(M+1)!} \Delta T^{\half(M+2)}\\
			&\leq (1+\mathcal{L}) |e^{k+1}_{m-1}| + \mathcal{L}(1+\Lambda)\|\mathbf{e}^k\|+ \frac{\tilde F}{(M+1)!} \Delta T^{\half(M+2)}
		\end{split}
	\end{equation} 
	Now, appealing to Corollary 2.3 of Causley and Seal~\cite{CausleySeal2017} for the recursion, we get
	\begin{equation}
		|e_m^{k+1}| \leq (1+\mathcal{L}) |e_0| + m (1+\mathcal{L})^{m-1}\biggl( \mathcal{L}(1+\Lambda)\|\mathbf{e}^k\| + \frac{\tilde F}{(M+1)!} \Delta T^{\half(M+2)}\biggr).
	\end{equation}
	Assuming $|e_0| = 0$ due to the given initial condition, using $m\leq M$,  estimating $\mathcal{L} \leq C \sqrt{\Delta T}$ with a constant $C$, and setting $C_1 = C M(1+\mathcal{L})^{M-1}(1+\Lambda)$, $C_2 = M(1+\mathcal{L})^{M-1}\frac{\tilde F}{(M+1)!}$  yields the desired estimate.
	\end{proof}

	\begin{remark}\label{rem:worstcasehalforder}
	In contrast to standard SDC convergence results, here only half an order is gained per sweep, and the maximum order is halved as well.
	Using additional terms of the SBB approximation \emph{decreases} the maximum order of accuracy, as due to the $\frac{\sqrt{2}}{\sqrt{\Delta T}}\cos\bigl(\frac{j\pi t}{\Delta T}\bigr)\eta_j$ terms, the bound for the $M+1$-st derivative is
	\begin{equation}
		\norm{\frac{d^{M+1}}{dt^{M+1}}(f\circ x)}_\infty \leq \frac{\tilde{F}}{(\Delta T)^{\half(2(M+1)+1)}},
	\end{equation}
	and the bound in \eqref{eq:secondIntegralBnd} becomes  $\frac{\tilde{F}}{(M+1)!} \sqrt{\Delta T}$. Thus, in this case spectral quadrature involving Lagrange interpolation of the oscillatory terms in the SBB approximation cannot be used. Alternatives include specialized quadrature for highly oscillating integrals, e.g., Filon-type quadrature~\cite{Iserles2004,DeanoHuybrechsIserles2017}.
	\end{remark}

	\subsection{Smooth Brownian bridge approximation error}
	
	The results above cover the convergence of SDC-SBB to the true solution of the SBB-ODE. As for convergence to the SBB-ODE to the true SDE,
	there is a large amount of literature on the Wong-Zakai approximation to the true Wiener process, see, e.g., the survey~\cite{WOS:A1996UN21100003}. Convergence rates are derived, e.g., in ~\cite{Hausenblas01112024} for SDEs with multiplicative noise, or in~\cite{BenAmmouLanconelli2018}, where it is explained that for a class of SDEs driven by scalar Brownian motion, the speed of convergence of the SDE solution coincides with the speed of convergence of the smoothed Brownian motion to the original one. For this we have according to~\cite[Lemma 2.2.16]{ZhangKarniadakis2017} for a sufficiently small $\epsilon$
	\begin{equation}\label{eq:convorderWZ}
	|W(t)-W^{(J)}(t)| = \mathcal{O}\biggl(\biggl(\frac{\Delta T}{J}\biggr)^{1/2-\epsilon}\biggr),
	\end{equation}
	where $W^{(J)}$ denotes the Karhunen-Loéve expansion of Brownian motion with truncation index $J$. Using this estimate with the smooth Brownian bridge approximation \eqref{eq:SBBconstruction} in combination with \eqref{eq:bbridge} yields the same convergence order for the smooth Brownian bridge $B^{(J)}(t)$ to the true bridge, $B(t)$. 
	Eq.~\eqref{eq:convorderWZ} indicates that the number of terms in the expansion has to grow with shrinking time step size, e.g., $J=1/\Delta T$ for a temporal convergence order of $1-2\epsilon$. Additional terms in the expansion not only require specialized quadrature for the computation of $I_S$, but pose restrictions on the number of collocation nodes due to substepping by an Euler method in the computation of the correction sweep, see eq.~\eqref{eq:correctionsweep}. As a rule of thumb, for an explicit Euler method, the stepsize, i.e., here the distance between two quadrature nodes, should be less than $1/4$-th of the oscillation period. For a term involving $\cos(j\pi t/\Delta T)$ with $t\in [0,\Delta T]$ this would result in $M>2j$ collocation nodes.
	Options to avoid the need for fine discretization include introducing terms to the expansion that reduce the variance error without high-frequency oscillations, or using higher-order substeppers. A thorough analysis of these convergence issues and potential remedies is out of scope for this paper, but will be a topic for future research.

	However, there is a special case, where it is easy to show that the Wong-Zakai approximation with only the linear term is sufficient.
	The Stratonovich-SDE
	\begin{equation}
	\label{eq:ReducSDE}
	dX_t = \alpha b(X_t)\ dt + b(X_t)\circ dW_t
	\end{equation}
	has the general solution
	\begin{equation}
	X_t = h^{-1}(\alpha t + W_t + h(X_0)),
	\end{equation}
	where $h$ is given by
	\begin{equation}\label{eq:def_h}
	h(x) = \int^x_0 \frac{ds}{b(s)},
	\end{equation}
	see~\cite[Chap.~4.4]{KloedenPlaten1992}. The SBB approximation on one time step $[0,\Delta T]$
	\begin{equation}
	\frac{dx}{ds} = \alpha b(x) + b(x)\biggl(\frac{\eta_0}{\sqrt{\Delta T}} + \frac{\sqrt{2}}{\sqrt{\Delta T}} \sum_{j=1}^{J}\cos{\bigl(\frac{j\pi s}{\Delta T}\bigr)}\eta_j \biggr)
	\end{equation}
	can be solved using separation of variables,
	\begin{equation}
	\int_{x_0}^{x_{\Delta T}} \frac{dx}{b(x)} = \int_{0}^{\Delta T} \biggl( \alpha + \frac{\eta_0}{\sqrt{\Delta T}} + \frac{\sqrt{2}}{\sqrt{\Delta T}} \sum_{j=1}^{J}\cos{\bigl(\frac{j\pi s}{\Delta T}\bigr)}\eta_j\biggr)\ ds,
	\end{equation}
	yielding
	\begin{equation}
	x_{\Delta T} = h^{-1}\biggl(\alpha \Delta T + \sqrt{\Delta T}\eta_0 + \frac{\sqrt{2\Delta T}}{\pi}\sum_{j=1}^{J}\sin{\bigl( \frac{\pi j \Delta T}{\Delta T}\bigr)}\frac{\eta_j}{j} + h(x_0)\biggr)
	\end{equation}
	with $h$ defined by~\eqref{eq:def_h}. As $\sin(m\pi)=0$ and $\sqrt{\Delta T}\eta_0 \sim \mathcal{N}(0,\Delta T)$, i.e., is a realization of the Wiener process $W_t$, the solutions of the original SDE and the SBB-ODE approximation coincide at the discrete time steps. For problems of this class, the Wong-Zakai approximation is exact, even using only the linear part, and the overall approximation error thus is determined only by the approximation of the SBB-ODE solution.
	
	\begin{remark}
	The observation, that for this class of SDEs using only the linear term of the Wong-Zakai approximation is sufficient has also been made in~\cite{LondonoVillegas2016}. However, it is not used to improve numerical results in the sense that SDEs of the form~\eqref{eq:ReducSDE} can essentially be treated as ODEs, even if $b(X_t)$ is nonlinear. 
	Geometric Brownian motion, e.g., used in the Black-Scholes model for stock prices (e.g.,~\cite[Chap.~9]{IBE2013263}), is one important stochastic process satisfying this condition.
	\end{remark}

	\section{Numerical Examples} 
	\label{section:NumExp}
	
	We show experimental convergence order and computational effort of the SDC-SBB method for four test problems, and compare performance to the standard Euler-Maruyama- and Milstein schemes. The computational effort is measured by counting the right-hand-side function evaluations of the drift and diffusive term separately. 
	We consider strong and weak convergence; for weak convergence we use the approximation of the expectation value as a proxy. If not stated differently, the approximation $\hat{X}_{T}$ is compared to the exact solution $X_T$ at final time $T$. The underlying time interval of all examples is $[0,1]$. Since the SDC approximations converge to the solution of the Stratonovich-SDE, the problems are set in this form directly. All examples use only the linear Brownian bridge expansion term. The numerical experiments are implemented in MATLAB R2024b. The code is publicly available~\cite{binkowski_2026_23387536}. 
	
	\subsubsection{Linear scalar SDE.}
	We consider the linear SDE
	\begin{equation}
	X_t = \left(a-\frac{1}{2}b^2\right)X_t dt + bX_t \circ dW_t,\quad X(0) = X_0,
	\label{ex:TP1}
	\end{equation}
	with $a,b, X_0 \in \R$. 
	Its exact solution is given by
	\begin{equation}
	X_t = X_0 \exp \left( \left( a-\frac{1}{2}b^2 \right) t + bW_t \right).
	\end{equation}
	We choose the initial value $X_0 = 1$, and set $a=0.5$ and $b=1$ to focus on the approximation of the stochastic part only.
	%
	
	We choose a Lobatto time grid with 5 collocation points, $10^4$ realizations to compute the expectation, and vary the number of SDC sweeps from $0$ to $8$ or $9$ depending on the initialization used. Figure \ref{fig:OrderSweepsAllInits} shows the convergence order for increasing number of SDC sweeps.  
	The theoretical convergence orders are plotted as dashed lines, showing that they fit the experiment. 
	
	\begin{figure}[tbh!]
	\centering
	\begin{subfigure}[b]{0.49\textwidth} 
		\centering \resizebox{\linewidth}{!}{ 
			\centering	\begin{tikzpicture}[scale=0.75]
				\begin{axis}[
					every axis plot/.style={
						line width=1.25pt,
						mark size=2.5
					},
					label style={font=\sffamily},
					tick label style={font=\sffamily},
					xmode=log,
					ymode=log,
					log basis x = {2},
					xlabel={\large $\Delta T$}, 
					ylabel= \large {$ \mathbb{E} \bigl( \bigl\vert \hat{X}_{T}  - X_{T}  \bigr\vert \bigr)  $},
					ymin=0.5e-11, ymax = 10,
					xtick= data,
			legend style={draw=none, at={(0.45,-0.2)},anchor=north, font = \tiny \sffamily, /tikz/every even column/.append style={column sep=0.5cm}}, 
			legend columns=3,
			transpose legend,
			legend to name={legend:convEx1a},
			domain = 2^-6:2^-3,
			]
			\addplot[color=red,mark=*]
			table[x=stepSize,y=errStr]{data_bb_1_orderSweeps/Err_compEff_SDC_BB_exp_5_lo_eulerS_01_10000_64_0.dat};
			\addlegendentry{initialization};
			\addplot[solid,color=orange,mark=diamond*]
			table[x=stepSize,y=errStr]{data_bb_1_orderSweeps/Err_compEff_SDC_BB_exp_5_lo_eulerS_01_10000_64_1.dat};
			\addlegendentry{sweeps=1}; 
		\addplot[solid,color=blue,mark=diamond*]
		table[x=stepSize,y=errStr]{data_bb_1_orderSweeps/Err_compEff_SDC_BB_exp_5_lo_eulerS_01_10000_64_2.dat};
		\addlegendentry{sweeps=2}; 
	\addplot+[mark=none,dashed,color=black]{ 0.035*x };					\addlegendentry{order $1$};	
	\addplot[solid,color=brown,mark=x]
	table[x=stepSize,y=errStr]{data_bb_1_orderSweeps/Err_compEff_SDC_BB_exp_5_lo_eulerS_01_10000_64_3.dat};
	\addlegendentry{sweeps=3}; 
	\addplot[solid,color=green!40!black,mark=x]
	table[x=stepSize,y=errStr]{data_bb_1_orderSweeps/Err_compEff_SDC_BB_exp_5_lo_eulerS_01_10000_64_4.dat};
	\addlegendentry{sweeps=4}; 
	\addplot+[mark=none,dashdotted,color=black]{ 0.0065*x^2 };				\addlegendentry{order $2$};
	\addplot[solid,color=purple,mark=triangle*]
	table[x=stepSize,y=errStr]{data_bb_1_orderSweeps/Err_compEff_SDC_BB_exp_5_lo_eulerS_01_10000_64_5.dat};
	\addlegendentry{sweeps=5}; 
	\addplot[solid,color=olive!85!black,mark=triangle*]
	table[x=stepSize,y=errStr]{data_bb_1_orderSweeps/Err_compEff_SDC_BB_exp_5_lo_eulerS_01_10000_64_6.dat};
	\addlegendentry{sweeps=6}; 
	\addplot+[mark=none,dotted,color=black]{ x^3*0.001 };					\addlegendentry{order $3$};
	\addplot[solid,color=red,mark=square*]
	table[x=stepSize,y=errStr]{data_bb_1_orderSweeps/Err_compEff_SDC_BB_exp_5_lo_eulerS_01_10000_64_7.dat};
	\addlegendentry{sweeps=7}; 
	\addplot[solid,color=orange!85!black,mark=square*]
	table[x=stepSize,y=errStr]{data_bb_1_orderSweeps/Err_compEff_SDC_BB_exp_5_lo_eulerS_01_10000_64_8.dat};
	\addlegendentry{sweeps=8}; 
	\addplot+[mark=none,densely dotted,color=black]{ x^4*0.00025 };		\addlegendentry{order $4$};
\end{axis}
\end{tikzpicture} }
\end{subfigure}
\begin{subfigure}[b]{0.49\textwidth} 
\centering \resizebox{\linewidth}{!}{ 
\centering	\begin{tikzpicture}[scale=0.75]
\begin{axis}[
every axis plot/.style={
line width=1.25pt,
mark size=2.5
},
label style={font=\sffamily},
tick label style={font=\sffamily},
xmode=log,
ymode=log,
log basis x = {2},
xlabel={\large $\Delta T$}, 
ylabel= \large {$ \mathbb{E} \bigl( \bigl\vert \hat{X}_{T}  - X_{T}  \bigr\vert \bigr)  $},
ymin=0.5e-11, ymax = 10,
xtick= data,
legend style={draw=none, at={(0.45,-0.2)},anchor=north, font = \tiny \sffamily, /tikz/every even column/.append style={column sep=0.5cm}}, 
legend columns=3,
transpose legend,
legend to name={legend:convEx1},
domain = 2^-6:2^-3,
]
\addplot[color=red,mark=*]
table[x=stepSize,y=errStr]{data_bb_1_orderSweeps/Err_compEff_SDC_BB_exp_5_lo_eMItoS_01_10000_64_0.dat};
\addlegendentry{initialization};
\addplot[mark=none,loosely dashed]{ 0.35*x^0.5 };   	\addlegendentry{order $0.5$};
\addplot[solid,color=orange,mark=diamond*]
table[x=stepSize,y=errStr]{data_bb_1_orderSweeps/Err_compEff_SDC_BB_exp_5_lo_eMItoS_01_10000_64_1.dat};
\addlegendentry{sweeps=1}; 
\addplot[solid,color=blue,mark=diamond*]
table[x=stepSize,y=errStr]{data_bb_1_orderSweeps/Err_compEff_SDC_BB_exp_5_lo_eMItoS_01_10000_64_2.dat};
\addlegendentry{sweeps=2}; 
\addplot+[mark=none,dashed,color=black]{ 0.055*x };					\addlegendentry{order $1$};
\addplot[solid,color=brown,mark=x]
table[x=stepSize,y=errStr]{data_bb_1_orderSweeps/Err_compEff_SDC_BB_exp_5_lo_eMItoS_01_10000_64_3.dat};
\addlegendentry{sweeps=3}; 
\addplot[solid,color=green!40!black,mark=x]
table[x=stepSize,y=errStr]{data_bb_1_orderSweeps/Err_compEff_SDC_BB_exp_5_lo_eMItoS_01_10000_64_4.dat};
\addlegendentry{sweeps=4}; 
\addplot+[mark=none,dashdotted,color=black]{ 0.007*x^2 };				\addlegendentry{order $2$};
\addplot[solid,color=purple,mark=triangle*]
table[x=stepSize,y=errStr]{data_bb_1_orderSweeps/Err_compEff_SDC_BB_exp_5_lo_eMItoS_01_10000_64_5.dat};
\addlegendentry{sweeps=5}; 
\addplot[solid,color=olive!85!black,mark=triangle*]
table[x=stepSize,y=errStr]{data_bb_1_orderSweeps/Err_compEff_SDC_BB_exp_5_lo_eMItoS_01_10000_64_6.dat};
\addlegendentry{sweeps=6}; 
\addplot+[mark=none,dotted,color=black]{ x^3*0.001  };		\addlegendentry{order $3$};
\addplot[solid,color=red,mark=square*]
table[x=stepSize,y=errStr]{data_bb_1_orderSweeps/Err_compEff_SDC_BB_exp_5_lo_eMItoS_01_10000_64_7.dat};
\addlegendentry{sweeps=7}; 
\addplot[solid,color=orange!85!black,mark=square*]
table[x=stepSize,y=errStr]{data_bb_1_orderSweeps/Err_compEff_SDC_BB_exp_5_lo_eMItoS_01_10000_64_8.dat};
\addlegendentry{sweeps=8}; 
\addplot+[mark=none,densely dotted,color=black]{ x^4*0.00025 };		\addlegendentry{order $4$};
\end{axis}
\end{tikzpicture} }
\end{subfigure}
\ref*{legend:convEx1}
\caption{Accuracy of the numerical approximation of \eqref{ex:TP1} considering the stochastic part only, i.e., $a=0.5$ and $b=1$. We choose SDC-$5$ with a Lobatto time grid, $10^4$ realizations and SDC sweeps from $0$ to $8$ or $9$ depending on the initialization used. The theoretical convergence orders are plotted as dashed lines. 
Left:  initialization using explicit Euler for the SBB-ODE.  Right:  Euler-Maruyama for \Ito-SDE with interpolation initialization. } \label{fig:OrderSweepsAllInits}
\end{figure}

We observe that with every second SDC sweep the convergence order increases by one. This phenomenon can be explained by calculating the method's order considering the ODE 
$\frac{dx}{dt} = cx, \quad x_0 = 1,$
where $c$ is a constant. Let 
$\exp(ct) = 1+ ct + \frac{c^2t^2}{2!} + \frac{c^3t^3}{3!} + ...$
be the exact solution and its Taylor series expansion. When the numerical approximation $\hat{x}^1$ of the ODE is expected to obtain order of accuracy $p=2$, then it must equal the first three terms of the Taylor series expansion of the exact solution. For $c=\frac{\eta_0}{\sqrt{\Delta T}},$ we have the above defined SBB-ODE of \eqref{ex:TP1}. As $\mathbb{E}(\eta_0^k) =0$ if $k$ is odd \cite{papoulis2002probability}, we can approximate the weak local error order after one iteration with time step $\Delta T$ by
\begin{multline}
\lvert\mathbb{E}\left(  x_{\Delta T} - \hat{x}^1_{\Delta T} \right)  \rvert= \\
\bigg\lvert\mathbb{E} \left( \left( 1+ \frac{\eta_0\Delta T}{\sqrt{\Delta T}} + \frac{\eta_0^2\Delta T^2}{2\Delta T} +  \frac{\eta_0^3\Delta T^3}{3!\Delta T^{3/2}} + \frac{\eta_0^4\Delta T^4}{4!\Delta T^2} + \mathcal{O}\left(\Delta T^{5/2}\right) \right) 
- \hat{x}^1(\Delta T) \right)\bigg\rvert \\
= \bigg\lvert \mathbb{E} \left( \frac{\eta_0^3\Delta T^3}{3!\Delta T^{3/2}} + \frac{\eta_0^4\Delta T^4}{4!\Delta T^2} + \mathcal{O}(\Delta T^{5/2}) \right) \bigg\rvert= \frac{3\Delta T^2}{4!} + \mathcal{O}(\Delta T^{5/2}) 
= \mathcal{O} (\Delta T^2).
\end{multline}
Additionally, we can estimate
\begin{multline}
\mathbb{E}\left( \lvert  x_{\Delta T} - \hat{x}^1_{\Delta T} \rvert^2 \right) = \mathbb{E} \left( \left\lvert\frac{\eta_0^3\Delta T^3}{3!\Delta T^{3/2}} + \frac{\eta_0^4\Delta T^4}{4!\Delta T^2} + \mathcal{O}(\Delta T^{5/2}) \right\rvert^2 \right) \\ 
= \mathbb{E} \left( \left\lvert\frac{\eta_0^3\Delta T^3}{3!\Delta T^{3/2}} + \frac{\eta_0^4\Delta T^4}{4!\Delta T^2} \right\rvert^2 +   2 \left(\frac{\eta_0^3\Delta T^3}{3!\Delta T^{3/2}} + \frac{\eta_0^4\Delta T^4}{4!\Delta T^2}  \right) \mathcal{O}(\Delta T^{5/2})  \right) + \mathcal{O}(\Delta T^{5}) \\
= \mathbb{E} \left( \left(\frac{\eta_0^3\Delta T^3}{3!\Delta T^{3/2}} \right)^2 + \left( \frac{\eta_0^4\Delta T^4}{4!\Delta T^2} \right)^2 +  \frac{2\eta_0^7\Delta T^7}{3!\, 4!\Delta T^{7/2}} \right) + \mathcal{O}(\Delta T^{9/2}) \\
= \frac{15}{36} \Delta T^3 + \frac{105}{576} \Delta T^4 + \mathcal{O}(\Delta T^{9/2}) = \frac{5}{12}\Delta T^3 + \mathcal{O}(\Delta T^{4})
\end{multline}
This results in a strong convergence order of $p=3/2-1/2=1$~\cite[Thm.~1.1.1, Rem.~1.1.2]{MilsteinTretyakov2021}, which agrees with the numerical results in Figure~\ref{fig:OrderSweepsAllInits}. With one more sweep, the local weak error has the same order as before, i.e.,
\begin{align*}
\lvert\mathbb{E}\left(  x_{\Delta T} - \hat{x}^2_{\Delta T} \right) \rvert &= \bigg\lvert\mathbb{E} \left( \frac{\eta_0^4\Delta T^4}{4!\Delta T^2} + \mathcal{O}(\Delta T^{5/2}) \right) \bigg\rvert= \frac{3\Delta T^2}{4!} + \mathcal{O}(\Delta T^{5/2}) 
= \mathcal{O} (\Delta T^2),
\end{align*}
and also the strong order does not increase~\cite[Thm.~1.1.1]{MilsteinTretyakov2021}.
We can observe two effects influencing the convergence order. Due to the denominator $\sqrt{\Delta T}$ of $B^{0}$ the order of accuracy is halved compared to the order of the method applied to an ODE, as predicted by the convergence analysis. Since $\mathbb{E}(\eta_0^k) =0$, if $k$ is odd, the theoretical order of $\tfrac{1}{2}$ is not visible. These two effects cause the order to increase only every second iteration by one. The initialization will obtain the order of the underlying method, i.e., zero for the constant (not shown) and Euler SBB-ODE initialization, and $\tfrac{1}{2}$ for Euler-Maruyama for \Ito-SDE with interpolation.

Figure \ref{fig:ConvOrder_TP1} shows the order of accuracy for varying number of collocation nodes. Additionally, we compare the Milstein and Euler-Maruyama scheme with the SDC methods with $3,4,5$ Lobatto nodes, using the Euler-Maruyama with interpolation initialization. 
\begin{figure}[htb!]
\centering
\begin{subfigure}[b]{0.49\textwidth} 
\centering \resizebox{\linewidth}{!}{ 
\begin{tikzpicture}
\begin{axis}[
every axis plot/.style={
line width=1.25pt,
mark size=2.5,
},
xmode=log,
ymode=log,
log basis x = {2},
xlabel=$\Delta T$, 
ylabel= \large {$ \mathbb{E} \bigl( \bigl\vert \hat{X}_{T}  - X_{T}  \bigr\vert \bigr)  $},
ymin=1e-12, ymax=10^0,
xmin=2^-11, xmax=2^-0,
try min ticks,
max space between ticks=35,
legend style={draw=none, at={(0.45,-0.2)},anchor=north, font = \small \sffamily, /tikz/every even column/.append style={column sep=0.5cm}}, 
legend columns=2,
transpose legend,
legend to name={legend:WeakConvOrder1},
domain = 3.90625e-03:6.25e-02,
]
\addplot[color=red,mark=*]
table[x=stepSize,y=errStr]{data_bb_1/Err_compEff_EM_exp_3_eq_constI_01_100000_1024.dat};
\addlegendentry{Euler-Maruyama (order 0.5)}; 
\addplot[solid,color=orange,mark=triangle*]
table[x=stepSize,y=errStr]{data_bb_1/Err_compEff_Mil_exp_3_eq_constI_01_100000_1024.dat};
\addplot[solid,color=blue,mark=diamond*]
table[x=stepSize,y=errStr]{data_bb_1/Err_compEff_SDC_BB_exp_3_lo_eMItoS_01_10000_1024.dat};
\addlegendentry{SDC-3 (order 2)}; 
\addplot[solid,color=brown,mark=square*]
table[x=stepSize,y=errStr]{data_bb_1/Err_compEff_SDC_BB_exp_4_lo_eMItoS_01_10000_1024.dat};
\addlegendentry{SDC-4 (order 3)}; 
\addplot[solid,color=green!40!black,mark=x]
table[x=stepSize,y=errStr]{data_bb_1/Err_compEff_SDC_BB_exp_5_lo_eMItoS_01_10000_1024.dat};
\addlegendentry{SDC-5 (order 4)}; 
\addplot+[mark=none,densely dashed,color=black, domain=2^-6:2^-1]{ x^4*0.00007 };	
\addplot+[mark=none, dash dot dot,color=black, domain=2^-9:2^-1]{ 0.001*x^3};	
\addplot+[mark=none,dash dot,color=black, domain=2^-10:2^-1]{ 0.025*x^2};	
\addplot+[mark=none,dashed,color=black, domain=2^-10:2^-1]{ 0.22*x };		
\addplot+[mark=none,loosely dotted,color=black, domain=2^-10:2^-1]{ 0.3*x^0.5 };		
\end{axis}
\end{tikzpicture} } 
\end{subfigure} 
\begin{subfigure}[b]{0.49\textwidth} \centering \resizebox{\linewidth}{!}{ 
\begin{tikzpicture}
\begin{axis}[
every axis plot/.style={
line width=1.25pt,
mark size=2.5,
},
xmode=log,
ymode=log,
log basis x = {2},
xlabel=$\Delta T$, 
ylabel= \large{$\bigl\vert \mathbb{E} \bigl( 
\hat{X}_{T}\bigr)  - \mathbb{E} \bigl(X_{T}  \bigr) \bigr\vert $},
ymin=1e-12, ymax=10^0,
xmin=2^-11, xmax=2^-0,
try min ticks,
max space between ticks=35,
legend style={draw=none, at={(0.45,-0.2)},anchor=north, font = \small \sffamily, /tikz/every even column/.append style={column sep=0.5cm}}, 
legend columns=2,
transpose legend,
legend to name={legend:WeakConvOrder},
domain = 3.90625e-03:6.25e-02,
]
\addplot[color=red,mark=*]
table[x=stepSize,y=errT]{data_bb_1/Err_compEff_EM_exp_3_eq_constI_01_100000_1024.dat};
\addlegendentry{Euler-Maruyama (order 0.5 / 1)}; 
\addplot[solid,color=orange,mark=triangle*]
table[x=stepSize,y=errT]{data_bb_1/Err_compEff_Mil_exp_3_eq_constI_01_100000_1024.dat};
\addlegendentry{Milstein (order 1)}; 
\addplot[solid,color=blue,mark=diamond*]
table[x=stepSize,y=errT]{data_bb_1/Err_compEff_SDC_BB_exp_3_lo_eMItoS_01_10000_1024.dat};
\addlegendentry{SDC-3 (order 2)}; 
\addplot[solid,color=brown,mark=square*]
table[x=stepSize,y=errT]{data_bb_1/Err_compEff_SDC_BB_exp_4_lo_eMItoS_01_10000_1024.dat};
\addlegendentry{SDC-4 (order 3)}; 
\addplot[solid,color=green!40!black,mark=x]
table[x=stepSize,y=errT]{data_bb_1/Err_compEff_SDC_BB_exp_5_lo_eMItoS_01_10000_1024.dat};
\addlegendentry{SDC-5 (order 4)}; 
\addplot+[mark=none,densely dashed,color=black, domain=2^-6:2^-1]{ x^4*0.000055 };	
\addplot+[mark=none, dash dot dot,color=black, domain=2^-9:2^-1]{ 0.0002*x^3};	
\addplot+[mark=none,dash dot,color=black, domain=2^-10:2^-1]{ 0.02*x^2};	
\addplot+[mark=none,dashed,color=black, domain=2^-10:2^-1]{ 0.09*x };		
\end{axis}
\end{tikzpicture} } 
\end{subfigure} 
\ref*{legend:WeakConvOrder}
\caption{Strong  (left) and weak (right) convergence for the linear scalar SDE example. 
We used $10^4$ realizations for the SDC-$p$ methods and $10^5$ for Euler-Maruyama and Milstein scheme to determine the expected error.} \label{fig:ConvOrder_TP1} 
\end{figure}
Computational efficiency of the method is illustrated in Figure~\ref{fig:CompEff_TP1}. 
For all schemes, the effort considering strong convergence  is somewhat higher than the effort considering weak convergence.  Aiming at a moderate, fixed  accuracy, we observe higher efficiency using a higher-order SDC method, i.e., more collocation nodes and sweeps. Compared to the Euler-Maruyama and Milstein scheme, the SDC method achieves higher accuracy for a fixed number of function evaluations, except for rather large error tolerances.

\begin{figure}[htb!]
\centering
\begin{subfigure}[b]{0.48\textwidth} 
\centering \resizebox{\linewidth}{!}{ 
\begin{tikzpicture}
\begin{axis}[
every axis plot/.style={
line width=1.25pt,
mark size=2.5,
},    
xmode=log,
ymode=log,
legend style={draw=none, at={(0.45,-0.2)},anchor=north, font =\small \sffamily, /tikz/every even column/.append style={column sep=0.5cm}}, 
legend columns=2,
transpose legend,
xlabel={function evaluations},    
ylabel= \large {$ \mathbb{E} \bigl( \bigl\vert \hat{X}_{T}  - X_{T}  \bigr\vert \bigr)  $},   
ymin=1e-12, ymax=10^0,
xmin=10^1,
xmax=10^5,
try min ticks,
max space between ticks=50
]
\addplot[color=red,mark=*] table[x=fctEval,y=errStr]{data_bb_1/compEff/Err_compEff_EM_exp_3_eq_constI_01_100000_1024.dat};
\addplot[color=orange,mark=triangle*] table[x=fctEval,y=errStr]{data_bb_1/compEff/Err_compEff_Mil_exp_3_eq_constI_01_100000_1024.dat};
\addplot[color=blue,mark=diamond*] table[x=fctEval,y=errStr]{data_bb_1/compEff/Err_compEff_SDC_BB_exp_3_lo_eMItoS_01_10000_1024.dat};
\addplot[color=brown,mark=square*] table[x=fctEval,y=errStr]{data_bb_1/compEff/Err_compEff_SDC_BB_exp_4_lo_eMItoS_01_10000_1024.dat};
\addplot[color=green!40!black,mark=x] table[x=fctEval,y=errStr]{data_bb_1/compEff/Err_compEff_SDC_BB_exp_5_lo_eMItoS_01_10000_1024.dat};
\end{axis}
\end{tikzpicture}}
\end{subfigure}
\begin{subfigure}[b]{0.48\textwidth} 
\centering \resizebox{\linewidth}{!}{ 
\begin{tikzpicture}
\begin{axis}[
every axis plot/.style={
line width=1.25pt,
mark size=2.5,
},    
xmode=log,
ymode=log,
ymin=1e-12, ymax=10^0,
xmin=10^1,
xmax=10^5,
legend style={draw=none, at={(0.45,-0.2)},anchor=north, font =\small  \sffamily, /tikz/every even column/.append style={column sep=0.5cm}}, 
legend columns=2,
transpose legend,
legend to name={legend:CompEffTP1},
xlabel={function evaluations},    
ylabel= \large{$\bigl\vert \mathbb{E} \bigl( \hat{X}_{T}\bigr)  - \mathbb{E} \bigl(X_{T}  \bigr) \bigr\vert $},   
try min ticks,
max space between ticks=50
]
\addplot[color=red,mark=*] table[x=fctEval,y=errT]{data_bb_1/compEff/Err_compEff_EM_exp_3_eq_constI_01_100000_1024.dat};
\addlegendentry{Euler-Maruyama}; 
\addplot[color=orange,mark=triangle*] table[x=fctEval,y=errT]{data_bb_1/compEff/Err_compEff_Mil_exp_3_eq_constI_01_100000_1024.dat};
\addlegendentry{Milstein}; 
\addplot[color=blue,mark=diamond*] table[x=fctEval,y=errT]{data_bb_1/compEff/Err_compEff_SDC_BB_exp_3_lo_eMItoS_01_10000_1024.dat};
\addlegendentry{SDC-3}; 
\addplot[color=brown,mark=square*] table[x=fctEval,y=errT]{data_bb_1/compEff/Err_compEff_SDC_BB_exp_4_lo_eMItoS_01_10000_1024.dat};
\addlegendentry{SDC-4}; 
\addplot[color=green!40!black,mark=x] table[x=fctEval,y=errT]{data_bb_1/compEff/Err_compEff_SDC_BB_exp_5_lo_eMItoS_01_10000_1024.dat};
\addlegendentry{SDC-5}; 
\end{axis}
\end{tikzpicture}}
\end{subfigure}
\ref*{legend:CompEffTP1}
\caption{Computational effort shown by function evaluations per realization, and strong (left ) and weak convergence (right) for the linear scalar SDE example using 
Euler-Maruyama initialization for the \Ito-SDE with interpolation. We used $10^4$ realizations for the SDC-$p$ methods and $10^5$ for Euler-Maruyama and Milstein scheme to determine the expected error. } \label{fig:CompEff_TP1}
\end{figure}

\subsubsection{Nonlinear scalar SDE.}
Consider the nonlinear scalar SDE~\cite{tianTwoStageStochasticRungeKutta2002}
\begin{equation}
dX_t = -a\left(1-X_t^2\right)dt + b\left(1-X_t^2\right)\circ dW_t,
\label{ex:TP2}
\end{equation}
with  initial value $X_0 = 0$ and exact solution
\begin{equation}
X_t =\frac{\left( X_0 +1 \right)\exp\left( -2at+2bW_t \right)+X_0-1}{\left( X_0 +1 \right)\exp\left( -2at+2bW_t \right) -X_0+1}.
\end{equation}
We choose $a=b=1$, and use $B^{(0)}(t)$ as before. Comparing to implicit Euler-Maruyama and Milstein schemes, we observe similar behavior as for the linear SDE case. 
Figure \ref{fig:ConvOrder_TP2} shows the order of accuracy of the implicit Euler-Maruyama, implicit Milstein and SDC-$p$ method, $p=3,4,5$. 
\begin{figure}[tbh!]
\centering
\includegraphics[scale=0.6]{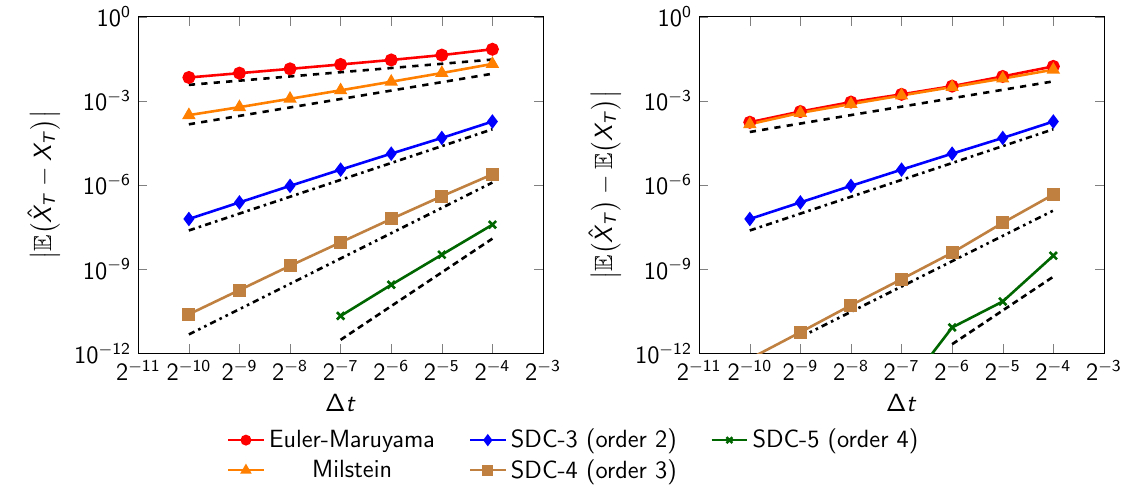}
\caption{Strong  (left) and weak convergence  (right) for the nonlinear SDE example using the Euler for SBB-ODE initialization. 
We used $10^5$ realizations for the SDC-$p$ methods, implicit Euler-Maruyama and implicit Milstein scheme to determine the expected error.
} \label{fig:ConvOrder_TP2} 
\end{figure}

\begin{figure}[tbh!]
\centering
\includegraphics[scale=0.6]{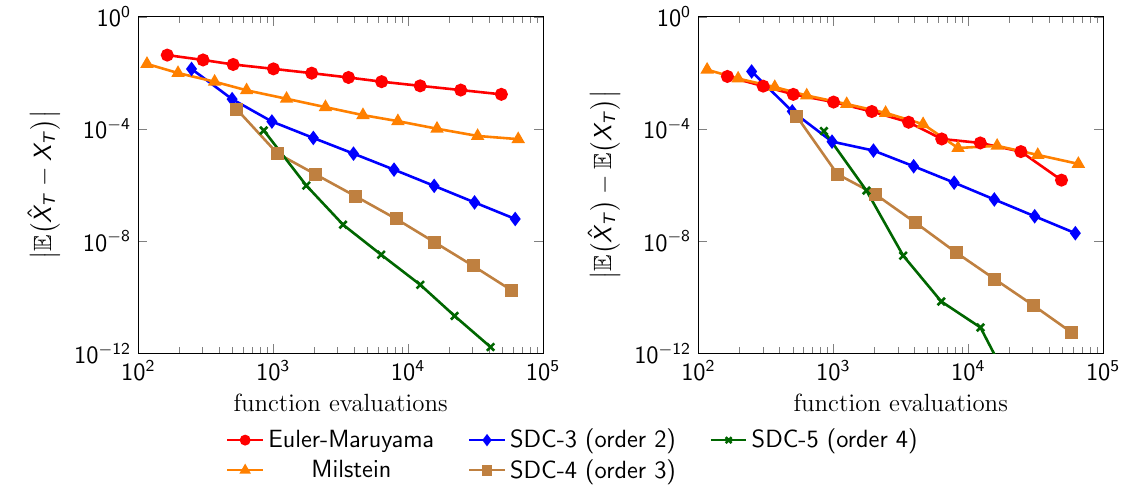}
\caption{Computational effort (function evaluations per realization) for strong (left) and weak (right) error  for the nonlinear scalar SDE example using an Euler for SBB-ODE intialization. 
We used $10^5$ realizations for the SDC-$p$ methods, Euler-Maruyama and Milstein scheme to determine the expected error.} \label{fig:CompEff_TP2}
\end{figure}
Computational effort for the different methods is shown in Figure \ref{fig:CompEff_TP2}.
Except for high error tolerances, SDC requires fewer function evaluations compared to Euler-Maruyama and Milstein schemes. 

\subsubsection{Two-dimensional linear SDE with commutative noise.} We investigate 
the linear two-dimensional SDE
\begin{equation}
dX_t = \left( A-\frac{1}{2} \left( B_1^2 + B_2^2 \right) \right) X_t dt + B_1 X_t \circ dW^1_t+ B_2 X_t \circ dW^2_t,
\label{ex:TP3}
\end{equation}
with exact solution
\begin{align*}
X_t &=\exp \left( \left( A-\frac{1}{2} \left( B_1^2 + B_2^2 \right) \right) t + B_1  W^1_t+ B_2 W^2_t \right) X_0,
\end{align*}
where $A,B_1, B_2$ are commutable $2 \times 2$ matrices.
The initial value and coefficient matrices are:
\begin{align*}
X_0 = \begin{pmatrix} 1 \\ 0 \end{pmatrix}, \quad A = \begin{pmatrix} 1.5 & -0.85  \\ 1.275 & -0.625 \end{pmatrix}, \quad B_1 = \begin{pmatrix} 0.9 & -0.2  \\ 0.3 & 0.4 \end{pmatrix}, \quad B_2 = \begin{pmatrix} 2.1 & -1.2  \\ 1.8 & -0.9 \end{pmatrix}.
\end{align*}

Strong and weak convergence for each component of the SDE system is illustrated in Figure \ref{fig:ConvOrder_TP4}, with similar behavior as in the scalar case.
\begin{figure}[tbh!]
\centering
\begin{subfigure}[b]{0.48\textwidth} \centering \resizebox{\linewidth}{!}{ 
\begin{tikzpicture}
\begin{axis}[
every axis plot/.style={
line width=1.25pt,
mark size=2.5,
},
xmode=log,
ymode=log,
log basis x = {2},
xlabel=$\Delta T$, 
ylabel=  \large{$ \mathbb{E} \bigl( \bigl\vert
\hat{X}^1_{T}  - X^1_{T} \bigr\vert  \bigr) $},
ymin=1e-12, ymax=10^0,
xmin=2^-11, xmax=2^-3,
try min ticks,
max space between ticks=35,
legend style={draw=none, at={(0.45,-0.2)},anchor=north, font = \small \sffamily, /tikz/every even column/.append style={column sep=0.5cm}}, 
legend columns=2,
transpose legend,
legend to name={legend:ConvOrder_TP4_X1},
]
\addplot[color=red,mark=*]
table[x=stepSize,y=errStr]{data_bb_1/Err_compEff_X1_EM_TP4_3_eq_constI_01_100000_16384_1_4.dat};
\addlegendentry{Euler-Maruyama}; 
\addplot[solid,color=orange,mark=triangle*]
table[x=stepSize,y=errStr]{data_bb_1/Err_compEff_X1_Mil_TP4_3_eq_constI_01_100000_16384_1_4.dat};
\addlegendentry{Milstein}; 
\addplot[solid,color=blue,mark=diamond*]
table[x=stepSize,y=errStr]{data_bb_1/Err_compEff_X1_SDC_BB_TP4_3_lo_eulerS_01_10000_16384.dat};
\addlegendentry{SDC-3 (order 2)}; 
\addplot[solid,color=brown,mark=square*]
table[x=stepSize,y=errStr]{data_bb_1/Err_compEff_X1_SDC_BB_TP4_4_lo_eulerS_01_10000_16384.dat};
\addlegendentry{SDC-4 (order 3)}; 
\addplot[solid,color=green!40!black,mark=x]
table[x=stepSize,y=errStr]{data_bb_1/Err_compEff_X1_SDC_BB_TP4_5_lo_eulerS_01_10000_16384.dat};
\addlegendentry{SDC-5 (order 4)}; 
\addplot+[mark=none,densely dashed,color=black, domain=2^-7:2^-4]{ x^4*0.001 };	
\addplot+[mark=none, dash dot dot,color=black, domain=2^-10:2^-4]{ 0.01*x^3};	
\addplot+[mark=none,dash dot,color=black, domain=2^-10:2^-4]{ 0.25*x^2};	
\addplot+[mark=none,dashed,color=black,, domain=2^-10:2^-4]{ 1.3*x };		
\addplot+[mark=none,loosely dotted,color=black, domain=2^-10:2^-4]{ 1.3*x^0.5 };		
\end{axis}
\end{tikzpicture} } 
\end{subfigure}  
\begin{subfigure}[b]{0.48\textwidth} 
\centering \resizebox{\linewidth}{!}{ 
\begin{tikzpicture}
\begin{axis}[
every axis plot/.style={
line width=1.25pt,
mark size=2.5,
},
xmode=log,
ymode=log,
log basis x = {2},
xlabel=$\Delta T$, 
ylabel= \large{ $ \bigl\vert \mathbb{E} \bigl( 
\hat{X}^1_{T} \bigr) - \mathbb{E} \bigl( X^1_{T}  \bigr)  \bigr\vert$},
ymin=1e-12, ymax=10^0,
xmin=2^-11, xmax=2^-3,
try min ticks,
max space between ticks=35,
legend style={draw=none, at={(0.45,-0.2)},anchor=north, font = \tiny \sffamily, /tikz/every even column/.append style={column sep=0.5cm}}, 
legend columns=2,
transpose legend,
]
\addplot[color=red,mark=*]
table[x=stepSize,y=errT]{data_bb_1/Err_compEff_X1_EM_TP4_3_eq_constI_01_100000_16384_1_4.dat};
\addplot[solid,color=orange,mark=triangle*]
table[x=stepSize,y=errT]{data_bb_1/Err_compEff_X1_Mil_TP4_3_eq_constI_01_100000_16384_1_4.dat};
\addplot[solid,color=blue,mark=diamond*]
table[x=stepSize,y=errT]{data_bb_1/Err_compEff_X1_SDC_BB_TP4_3_lo_eulerS_01_10000_16384.dat};
\addplot[solid,color=brown,mark=square*]
table[x=stepSize,y=errT]{data_bb_1/Err_compEff_X1_SDC_BB_TP4_4_lo_eulerS_01_10000_16384.dat};
\addplot[solid,color=green!40!black,mark=x]
table[x=stepSize,y=errT]{data_bb_1/Err_compEff_X1_SDC_BB_TP4_5_lo_eulerS_01_10000_16384.dat};
\addplot+[mark=none, dash dot dot,color=black, domain=2^-10:2^-4]{ 0.002*x^3};	
\addplot+[mark=none,dash dot,color=black, domain=2^-10:2^-4]{ 0.25*x^2};	
\addplot+[mark=none,dashed,color=black, domain=2^-10:2^-4]{ 0.5*x };		
\addplot+[mark=none,densely dashed,color=black, domain=2^-7:2^-4]{ x^4*0.0008 };	
\end{axis}
\end{tikzpicture} } 
\end{subfigure} 
\begin{subfigure}[b]{0.48\textwidth} \centering \resizebox{\linewidth}{!}{ 
\begin{tikzpicture}
\begin{axis}[
every axis plot/.style={
line width=1.25pt,
mark size=2.5,
},
xmode=log,
ymode=log,
log basis x = {2},
xlabel=$\Delta T$, 
ylabel= \large{ $ \mathbb{E} \bigl( \bigl\vert
\hat{X}^2_{T}  - X^2_{T} \bigr\vert  \bigr) $},
ymin=1e-12, ymax=10^0,
xmin=2^-11, xmax=2^-3,
try min ticks,
max space between ticks=35,
legend style={draw=none, at={(0.45,-0.2)},anchor=north, font = \tiny \sffamily, /tikz/every even column/.append style={column sep=0.5cm}}, 
]
\addplot[color=red,mark=*]
table[x=stepSize,y=errStr]{data_bb_1/Err_compEff_X2_EM_TP4_3_eq_constI_01_100000_16384_1_4.dat};
\addplot[solid,color=orange,mark=triangle*]
table[x=stepSize,y=errStr]{data_bb_1/Err_compEff_X2_Mil_TP4_3_eq_constI_01_100000_16384_1_4.dat};
\addplot[solid,color=blue,mark=diamond*]
table[x=stepSize,y=errStr]{data_bb_1/Err_compEff_X2_SDC_BB_TP4_3_lo_eulerS_01_10000_16384.dat};
\addplot[solid,color=brown,mark=square*]
table[x=stepSize,y=errStr]{data_bb_1/Err_compEff_X2_SDC_BB_TP4_4_lo_eulerS_01_10000_16384.dat};
\addplot[solid,color=green!40!black,mark=x]
table[x=stepSize,y=errStr]{data_bb_1/Err_compEff_X2_SDC_BB_TP4_5_lo_eulerS_01_10000_16384.dat};
\addplot+[mark=none,densely dashed,color=black, domain=2^-7:2^-4]{ x^4*0.001 };	
\addplot+[mark=none, dash dot dot,color=black, domain=2^-10:2^-4]{ 0.01*x^3};	
\addplot+[mark=none,dash dot,color=black, domain=2^-10:2^-4]{ 0.25*x^2};	
\addplot+[mark=none,dashed,color=black,, domain=2^-10:2^-4]{ 1.3*x };		
\addplot+[mark=none,loosely dotted,color=black, domain=2^-10:2^-4]{ 1.3*x^0.5 };		
\end{axis}
\end{tikzpicture} } 
\end{subfigure}
\begin{subfigure}[b]{0.48\textwidth} 
\centering \resizebox{\linewidth}{!}{ 
\begin{tikzpicture}
\begin{axis}[
every axis plot/.style={
line width=1.25pt,
mark size=2.5,
},
xmode=log,
ymode=log,
log basis x = {2},
xlabel=$\Delta T$, 
ylabel= \large{ $ \bigl\vert \mathbb{E} \bigl( 
\hat{X}^2_{T} \bigr) - \mathbb{E} \bigl( X^2_{T}  \bigr)  \bigr\vert$},
ymin=1e-12, ymax=10^0,
xmin=2^-11, xmax=2^-3,
try min ticks,
max space between ticks=35,
legend style={draw=none, at={(0.45,-0.2)},anchor=north, font = \small \sffamily, /tikz/every even column/.append style={column sep=0.5cm}}, 
legend columns=2,
transpose legend,
domain = 3.90625e-03:6.25e-02,
]
\addplot[color=red,mark=*]
table[x=stepSize,y=errT]{data_bb_1/Err_compEff_X2_EM_TP4_3_eq_constI_01_100000_16384_1_4.dat};
\addplot[solid,color=orange,mark=triangle*]
table[x=stepSize,y=errT]{data_bb_1/Err_compEff_X2_Mil_TP4_3_eq_constI_01_100000_16384_1_4.dat};
\addplot[solid,color=blue,mark=diamond*]
table[x=stepSize,y=errT]{data_bb_1/Err_compEff_X2_SDC_BB_TP4_3_lo_eulerS_01_10000_16384.dat};
\addplot[solid,color=brown,mark=square*]
table[x=stepSize,y=errT]{data_bb_1/Err_compEff_X2_SDC_BB_TP4_4_lo_eulerS_01_10000_16384.dat};
\addplot[solid,color=green!40!black,mark=x]
table[x=stepSize,y=errT]{data_bb_1/Err_compEff_X2_SDC_BB_TP4_5_lo_eulerS_01_10000_16384.dat};
\addplot+[mark=none, dash dot dot,color=black, domain=2^-10:2^-4]{ 0.002*x^3};	
\addplot+[mark=none,dash dot,color=black, domain=2^-10:2^-4]{ 0.25*x^2};	
\addplot+[mark=none,dashed,color=black, domain=2^-10:2^-4]{ 0.5*x };		
\addplot+[mark=none,densely dashed,color=black, domain=2^-7:2^-4]{ x^4*0.0008 };	
\end{axis}
\end{tikzpicture} } 
\end{subfigure} 
\ref*{legend:ConvOrder_TP4_X1}
\caption{Strong (left column) and weak convergence order (right column) for the two dimensional example for component $X^1$ (first row) and $X^2$ (second row) using the Euler for SBB-ODE initialization. The theoretical convergence orders are plotted as dashed lines. We used $10^4$ realizations for the SDC-$p$ methods and $10^5$ for Euler-Maruyama and Milstein scheme to determine the expected error.} \label{fig:ConvOrder_TP4} 
\end{figure} 
Computational effort for each component $X^1$ and $X^2$ is shown in Figure \ref{fig:CompEff_TP4}. 
For a strong error tolerance of $10^{-4}$, SDC$-3$ reduces the function evaluations by a factor around 10 compared to the Milstein scheme. 
\begin{figure}[tbh!]
\centering
\begin{subfigure}[b]{0.48\textwidth} 
\centering \resizebox{\linewidth}{!}{ 
\begin{tikzpicture}
\begin{axis}[
every axis plot/.style={
line width=1.25pt,
mark size=2.5,
},    
xmode=log,
ymode=log,
legend style={draw=none, at={(0.45,-0.2)},anchor=north, font =\small , /tikz/every even column/.append style={column sep=0.5cm}}, 
legend columns=2,
transpose legend,
legend to name ={legend:CompEff_TP4_X1},
xlabel={function evaluations},    
ylabel= \large{$\bigl\vert \mathbb{E} \bigl( \hat{X}^1_{T}  - X^1_{T}  \bigr) \bigr\vert $ },   
ymin=1e-12, ymax=10^0,
xmin=5*10^2,
xmax=10^5,
try min ticks,
max space between ticks=50
]
\addplot[color=red,mark=*] table[x=fctEval,y=errStr]{data_bb_1/compEff/Err_compEff_X1_EM_TP4_3_eq_constI_01_100000_16384_1_4.dat};
\addlegendentry{Euler-Maruyama }; 
\addplot[color=orange,mark=triangle*] table[x=fctEval,y=errStr]{data_bb_1/compEff/Err_compEff_X1_Mil_TP4_3_eq_constI_01_100000_16384_1_4.dat};
\addlegendentry{Milstein}; 
\addplot[color=blue,mark=diamond*] table[x=fctEval,y=errStr]{data_bb_1/compEff/Err_compEff_X1_SDC_BB_TP4_3_lo_eulerS_01_10000_16384.dat};
\addlegendentry{SDC-3 (order 2)}; 
\addplot[color=brown,mark=square*] table[x=fctEval,y=errStr]{data_bb_1/compEff/Err_compEff_X1_SDC_BB_TP4_4_lo_eulerS_01_10000_16384.dat};
\addlegendentry{SDC-4 (order 3)};
\addplot[color=green!40!black,mark=x] table[x=fctEval,y=errStr]{data_bb_1/compEff/Err_compEff_X1_SDC_BB_TP4_5_lo_eulerS_01_10000_16384.dat};
\addlegendentry{SDC-5 (order 4)};
\end{axis}
\end{tikzpicture}}
\end{subfigure}
\begin{subfigure}[b]{0.48\textwidth} 
\centering \resizebox{\linewidth}{!}{ 
\begin{tikzpicture}
\begin{axis}[
every axis plot/.style={
line width=1.25pt,
mark size=2.5,
},    
xmode=log,
ymode=log,
ymax=10^0,
ymin=1e-10,
xmin=5*10^2,
xmax=10^5,
legend style={draw=none, at={(0.45,-0.2)},anchor=north, font =\tiny  \sffamily, /tikz/every even column/.append style={column sep=0.5cm}}, 
legend columns=2,
transpose legend,
xlabel={function evaluations},    
ylabel= \large{$\bigl\vert \mathbb{E} \bigl( \hat{X}^1_{T}\bigr)  - \mathbb{E} \bigl(X^1_{T}  \bigr) \bigr\vert $},   
ymin=1e-12, ymax=10^0,
try min ticks,
max space between ticks=50
]
\addplot[color=red,mark=*] table[x=fctEval,y=errT]{data_bb_1/compEff/Err_compEff_X1_EM_TP4_3_eq_constI_01_100000_16384_1_4.dat};
\addplot[color=orange,mark=triangle*] table[x=fctEval,y=errT]{data_bb_1/compEff/Err_compEff_X1_Mil_TP4_3_eq_constI_01_100000_16384_1_4.dat};
\addplot[color=blue,mark=diamond*] table[x=fctEval,y=errT]{data_bb_1/compEff/Err_compEff_X1_SDC_BB_TP4_3_lo_eulerS_01_10000_16384.dat};
\addplot[color=brown,mark=square*] table[x=fctEval,y=errT]{data_bb_1/compEff/Err_compEff_X1_SDC_BB_TP4_4_lo_eulerS_01_10000_16384.dat};
\addplot[color=green!40!black,mark=x] table[x=fctEval,y=errT]{data_bb_1/compEff/Err_compEff_X1_SDC_BB_TP4_5_lo_eulerS_01_10000_16384.dat};
\end{axis}
\end{tikzpicture}}
\end{subfigure}
\begin{subfigure}[b]{0.48\textwidth} 
\centering \resizebox{\linewidth}{!}{ 
\begin{tikzpicture}
\begin{axis}[
every axis plot/.style={
line width=1.25pt,
mark size=2.5,
},    
xmode=log,
ymode=log,
legend style={draw=none, at={(0.45,-0.2)},anchor=north, font =\large \sffamily, /tikz/every even column/.append style={column sep=0.5cm}}, 
legend columns=2,
transpose legend,
xlabel={function evaluations},    
ylabel=\large{$ \mathbb{E} \bigl( \bigl\vert \hat{X}^2_{T}  - X^2_{T}  \bigr\vert  \bigr)$ },   
ymin=1e-12, ymax=10^0,
xmin=5*10^2,
xmax=10^5,
try min ticks,
max space between ticks=50
]
\addplot[color=red,mark=*] table[x=fctEval,y=errStr]{data_bb_1/compEff/Err_compEff_X2_EM_TP4_3_eq_constI_01_100000_16384_1_4.dat};
\addplot[color=orange,mark=triangle*] table[x=fctEval,y=errStr]{data_bb_1/compEff/Err_compEff_X2_Mil_TP4_3_eq_constI_01_100000_16384_1_4.dat};
\addplot[color=blue,mark=diamond*] table[x=fctEval,y=errStr]{data_bb_1/compEff/Err_compEff_X2_SDC_BB_TP4_3_lo_eulerS_01_10000_16384.dat};
\addplot[color=brown,mark=square*] table[x=fctEval,y=errStr]{data_bb_1/compEff/Err_compEff_X2_SDC_BB_TP4_4_lo_eulerS_01_10000_16384.dat};
\addplot[color=green!40!black,mark=x] table[x=fctEval,y=errStr]{data_bb_1/compEff/Err_compEff_X2_SDC_BB_TP4_5_lo_eulerS_01_10000_16384.dat};
\end{axis}
\end{tikzpicture}}
\end{subfigure}
\begin{subfigure}[b]{0.48\textwidth} 
\centering \resizebox{\linewidth}{!}{ 
\begin{tikzpicture}
\begin{axis}[
every axis plot/.style={
line width=1.25pt,
mark size=2.5,
},    
xmode=log,
ymode=log,
ymin=1e-12, ymax=10^0,
xmin=5*10^2,
xmax=10^5,
legend style={draw=none, at={(0.45,-0.2)},anchor=north, font =\tiny  \sffamily, /tikz/every even column/.append style={column sep=0.5cm}}, 
legend columns=2,
transpose legend,
xlabel={function evaluations},    
ylabel=  \large{$\bigl\vert \mathbb{E} \bigl( \hat{X}^1_{T}\bigr)  - \mathbb{E} \bigl(X^1_{T}  \bigr) \bigr\vert $},   
try min ticks,
max space between ticks=50
]
\addplot[color=red,mark=*] table[x=fctEval,y=errT]{data_bb_1/compEff/Err_compEff_X2_EM_TP4_3_eq_constI_01_100000_16384_1_4.dat};
\addplot[color=orange,mark=triangle*] table[x=fctEval,y=errT]{data_bb_1/compEff/Err_compEff_X2_Mil_TP4_3_eq_constI_01_100000_16384_1_4.dat};
\addplot[color=blue,mark=diamond*] table[x=fctEval,y=errT]{data_bb_1/compEff/Err_compEff_X2_SDC_BB_TP4_3_lo_eulerS_01_10000_16384.dat};
\addplot[color=brown,mark=square*] table[x=fctEval,y=errT]{data_bb_1/compEff/Err_compEff_X2_SDC_BB_TP4_4_lo_eulerS_01_10000_16384.dat};
\addplot[color=green!40!black,mark=x] table[x=fctEval,y=errT]{data_bb_1/compEff/Err_compEff_X2_SDC_BB_TP4_5_lo_eulerS_01_10000_16384.dat};
\end{axis}
\end{tikzpicture}}
\end{subfigure}
\ref*{legend:CompEff_TP4_X1}
\caption{Computational effort (function evaluations per realization) for the 2D example,  showing  strong (left column) and weak (right column)  error for components $X^1$ (first row) and $X^2$ (second row). 
We used $10^4$ realizations for the SDC-$p$ methods and $10^5$ for Euler-Maruyama and Milstein scheme to determine the expected error.} \label{fig:CompEff_TP4}
\end{figure}

\subsubsection{Beyond linear bridge approximation.} Finally, we consider the Ornstein-Uhlenbeck process on $[0,1]$,
\begin{equation}
dX_t = \lambda\left(\mu-X_t\right)dt + \sigma \circ dW_t, X_0 = \frac{1}{2},
\label{ex:OU}
\end{equation}
with $\lambda = 2, \mu=1, \sigma=0.5$, and exact solution
\begin{equation}
X_t = \mu + (X_0-\mu)\exp(-\lambda t	) + \sigma \int_0^t \exp(-\lambda (t-s) )\ dW_s.
\end{equation}
As this SDE is not of the form~\eqref{eq:ReducSDE}, using only the linear part of the SBB approximation is not enough. To avoid falling back to order \textonehalf, we do not use spectral quadrature for integrating 
\begin{equation}
b(t,X)\biggl(\sum_{j=1}^{J}\cos\bigl(\frac{j\pi t}{\Delta T}\bigr)\eta_j\biggr),
\end{equation}
but note that for SDC $b(t,X)$ is approximated by a polynomial $p(t)$ given through its coefficients at the collocation nodes. As for such a polynomial the product $p(t)\cos\bigl(\frac{j\pi t}{\Delta T}\bigr)$ can by integrated exactly using repeated application of integration-by-parts, we precompute integration matrices for $\ell(t) \cos\bigl(\frac{j\pi t}{\Delta T}\bigr)$ with Lagrange polynomials $\ell(t)$, and apply these to evaluate the respective integrals during SDC sweeps. To separate error contributions, for this experiment we first approximated the Wiener process on a very fine temporal grid with time-step $2^{-17}$ globally by a Karhunen-Lo\'eve expansion with 64 modes, and used this smooth version to construct the smooth Brownian bridges $B^{(128)}$ with $J=128$ modes on the coarse time intervals $\Delta T \in \{ 2^{-2}, \dots, 2^{-7}\}$ used in the convergence study. We fix the number of collocation nodes to $4$, and use $10^5$ samples to estimate expected values. Resulting strong errors are shown in Fig.~\ref{fig:OUconv}. 

\begin{figure}
\centering
\includegraphics{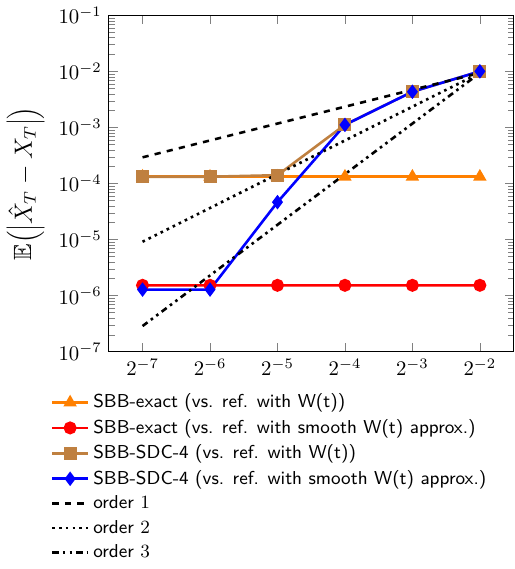}
\caption{Strong convergence for the Ornstein-Uhlenbeck example.}
\label{fig:OUconv}
\end{figure}

The figure illustrates two separate error contributions: For the SBB construction above, we solve the SBB-ODE exactly on the coarse time steps and compare to the exact solution obtained with the sampled Brownian motion $W(t)$ (orange-triangle line), and the smooth approximation of $W(t)$ on the full interval $[0, T]$ (red-circle line). The error of the SBB approximation with $128$ terms compared to the solution with the true Wiener process is $1.4\cdot 10^{-4}$, where the SBB approximation of the smooth process is significantly more accurate ($1.5\cdot 10^{-6})$. Regarding the SDC-SBB solution, we observe an experimental order $2.9$ determined by a linear fit to the error curve, close to the expected order $3$ when using the smooth reference (blue-diamond line), until it levels of at the SBB approximation error. Using the solution with the true nonsmooth Brownian motion as a reference (brown-square line), we observe order $2$ until the error level of the exact solution is reached, indicating that higher order can be reached with this approach, even though a reduction compared to the expected order $3$ occurs. However, such an order reduction is not unexpected for a stiff equation, where the fast oscillations cannot be resolved. Potential remedies include using implicit SDC, and will be investigated in future work.

\section{Conclusion}
\label{section:Concl}

We introduced an SDC-based method for solving SDEs given in Stratonovich form using a smooth Brownian bridge approximation, and give a convergence result for the simplied case of a linear approximation of the Brownian bridge. Numerical experiments validate the theoretical results, and show promising computational efficiency. However, the restriction of using only the linear term in the SBB approximation is a limitation that will need to be addressed. While first tests show the potential of achieving higher-order also for using more terms in the SBB approximation, this requires closer investigation. Here, future work will examine the influence of specialized quadrature on the convergence orders, adapt implicit or implicit-explicit SDC for the SBB-ODE, as well as develop problem-tailored solution representations, and will provide guidelines on the choice of algorithmic parameters to make the SDC-SBB method applicable for a wider range of problems. 

\subsection*{Acknowledgments}
We thank J. C. Mattingly (Duke University), M. Weiser (Zuse Institute Berlin), and T. J. Sullivan (University of Warwick) for numerous helpful discussions. L.B. further thanks S. Weißmann (University of Mannheim), for valuable discussions and additional input.

\printbibliography

\end{document}